\begin{document}

\title{Perfect Elimination Orderings for Symmetric  Matrices
\thanks{This work is dedicated to the memory of Michel Deza, with gratitude for his support during the early career of the first author. }
}


\author{Monique Laurent         \and
        Shin-ichi Tanigawa 
}


\institute{M. Laurent \at
              Centrum Wiskunde \& Informatica (CWI), Science Park 123, 1098 XG Amsterdam, The Netherlands. \\ Tilburg University, P.O. Box 90153, 5000 LE Tilburg, The Netherlands. \\
              \email{M.Laurent@cwi.nl}           
           \and
           S. Tanigawa \at
              Centrum Wiskunde \& Informatica (CWI), Science Park 123, 1098 XG Amsterdam, The Netherlands. \\ Research Institute for Mathematical Sciences, Kyoto University, Kitashirakawa-Oiwaketyo, Sakyo-ku, Kyoto, 606-8502, Japan\\
              \email{shinichi.tanigawa@gmail.com}
}

\date{Received: date / Accepted: date}

\maketitle

\begin{abstract}
We introduce a new class of structured symmetric matrices by extending the notion of perfect elimination ordering from graphs to weighted graphs or matrices. This offers a common framework capturing common vertex elimination orderings of monotone families of chordal graphs, Robinsonian matrices and ultrametrics. We give  a structural characterization for matrices that admit perfect elimination orderings in terms of forbidden substructures generalizing chordless cycles in graphs.
\keywords{Chordal graph \and Perfect elimination ordering \and Unit interval graph \and Ultrametric \and Shortest path metric \and Robinson matrix}
\end{abstract}

\section{Introduction} 
We introduce a new class of structured matrices by ways of {\em perfect elimination orderings}, an extension to weighted graphs of the classical notion of perfect elimination ordering for graphs. This  offers a common framework for the study of (adjacency matrices of) chordal graphs (and  their metric powers)  
as well as for  ultrametrics 
 and Robinsonian (dis)similarity matrices.

\smallskip 
Recall that a graph $G=(V,E)$ is {\em chordal} when it does not contain a chordless cycle of length at least 4, where a  cycle $C=(v_1,\cdots, v_p)$ in $G$ is said to be {\em chordless} if $C$ is an induced subgraph of $G$, i.e., if none of the pairs $\{v_i,v_j\}$ for $|i-j| \ge 2$ (indices taken modulo $p$) is an edge of $G$.
Chordal graphs  appear as tractable  or well-behaved cases in many  optimization problems 
 (see, e.g., \cite{Gavril,Grone}). This is often due to their equivalent characterization in terms of perfect elimination orderings.
A linear order $\pi$ of $V$ is called a {\em perfect elimination ordering} of $G$ if, for any vertices $x,y,z\in V$ such that 
$x<_\pi y<_\pi z$, $\{x,y\},\{x,z\}\in E$ implies $\{y,z\}\in E$. 
It is a well known fact that $G$ is chordal if and only if $G$ has a perfect elimination ordering~\cite{Dirac61,FG}.

\smallskip 
In this paper we extend this  notion  of vertex ordering to weighted graphs, aka symmetric matrices.
Throughout $\mathcal S^V$ is the set of symmetric matrices indexed by the set $V=[n]$.
Given a matrix $A=(A_{xy})\in \mathcal S^V$ we say that a linear order $\pi$ of $V$ is a {\em perfect elimination ordering of $A$} if it satisfies   the following {\em three-points condition}
\begin{equation}\label{eqineq}
A_{yz}\ge \min\{A_{xy},A_{xz}\}  \ \text{ for all } x,y,z\in V \text{ with } x<_{\pi} y<_{\pi} z.
\end{equation}
Note that the diagonal entries do not play a role in this definition.
When $A=A_G$ is the adjacency matrix of a graph $G$  both notions of perfect elimination orderings of $A_G$ and perfect elimination orderings of $G$ coincide. 


\smallskip 
Given a distance space $(V,d)$ and its associated distance matrix $D\in \mathcal S^V$ (with entries $D_{xy}=D_{yx}=d_{xy}$ for $x\ne y$ and $D_{xx}=0$ for $x,y\in V$), a perfect elimination ordering of the matrix $A=-D$ is a linear order $\pi$ of $V$ satisfying
\begin{equation}\label{eqD}
D_{yz} \le \max\{D_{xy},D_{xz}\}  \ \text{ for all } x,y,z\in V \text{ with }  x<_{\pi} y<_{\pi} z.
\end{equation}
Recall that $(V,d)$ is an ultrametric if the inequality in (\ref{eqD}) holds for {\em all} elements $x,y,z\in V$. In other words we have the following connection.

\begin{lemma}\label{lemultrametric}
Let $(V,d)$ be a  distance space  with distance matrix $D$. Then $(V,d)$  is an ultrametric if and only if every linear ordering of $V$ is a perfect elimination ordering of the matrix $-D$.
\end{lemma}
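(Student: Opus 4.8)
The plan is to prove both implications directly from the definitions, using the fact (already recorded in the excerpt) that a perfect elimination ordering of $-D$ is precisely a linear order $\pi$ satisfying (\ref{eqD}), and observing that the ultrametric condition is exactly inequality (\ref{eqD}) imposed on \emph{all} triples rather than only on the $\pi$-increasing ones. For the forward implication I would assume $(V,d)$ is an ultrametric, so that $D_{yz}\le \max\{D_{xy},D_{xz}\}$ holds for every triple $x,y,z\in V$. Then for any linear order $\pi$ and any $x<_\pi y<_\pi z$, the inequality required by (\ref{eqD}) is just a special case of the ultrametric inequality, so $\pi$ is automatically a perfect elimination ordering of $-D$. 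This direction is immediate, since restricting a universally quantified inequality to an order-selected subfamily of triples loses no information.

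For the reverse implication I would assume that every linear order of $V$ is a perfect elimination ordering of $-D$ and aim to recover the ultrametric inequality for an arbitrary ordered triple $(x,y,z)$. The key step is to exploit the freedom in choosing the order: given distinct $x,y,z$, I would pick any linear order $\pi$ with $x<_\pi y<_\pi z$ (for instance, listing these three elements first in that order and the remaining elements of $V$ arbitrarily afterwards). By hypothesis this $\pi$ is a perfect elimination ordering, so applying (\ref{eqD}) to exactly this triple yields $D_{yz}\le \max\{D_{xy},D_{xz}\}$. As $(x,y,z)$ ranges over all ordered triples of distinct points when $\pi$ ranges over all linear orders, every instance of the ultrametric inequality is obtained; the degenerate triples in which two of the three points coincide hold trivially because $D$ is a distance matrix with zero diagonal.

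I do not expect a genuine obstacle here, as the entire content lies in matching up the quantifiers. The one point that deserves explicit care is making clear \emph{why} demanding (\ref{eqD}) for all orders $\pi$ upgrades the one-sided, order-restricted inequality into the symmetric, unrestricted ultrametric inequality: a single fixed order only certifies (\ref{eqD}) when the designated apex $x$ precedes $y$ and $z$, whereas quantifying over all orders lets any of the three points of a triple play the role of apex. Once this observation is spelled out, both directions close in essentially one line each.
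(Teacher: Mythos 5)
Your proof is correct and follows exactly the argument the paper intends: the paper states Lemma~\ref{lemultrametric} as an immediate quantifier-matching consequence of the definition of ultrametric (inequality~(\ref{eqD}) for \emph{all} triples) versus a perfect elimination ordering (inequality~(\ref{eqD}) for $\pi$-increasing triples), offering no further proof beyond the phrase ``in other words.'' Your explicit treatment of the reverse direction --- choosing, for each triple, a linear order placing the apex first --- is precisely the observation the paper leaves implicit.
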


Another special class of matrices admitting a perfect elimination ordering arises from Robinsonian matrices.
A symmetric matrix $A$  is called a {\em Robinsonian similarity matrix} if 
there exists a linear order $\pi$ of $V$ satisfying the following three-points condition:
\begin{equation}
\label{eq:rob}
A_{xz}\le \min\{A_{xy},A_{yz}\}  \ \text{ for all } x,y,z\in V \text{ with } x<_\pi y<_\pi z;
\end{equation}
such an ordering is then called a {\em Robinson ordering} of $A$.
In the context of distances,  a  matrix $D$ is called a {\em Robinsonian dissimilarity matrix}
 when $A=-D$ is a Robinsonian similarity matrix. 
Robinsonian matrices have a long history and play an important role in classification problems in data science, in particular in ranking problems \cite{FAV} and in  the seriation problem (introduced by the archeologist Robinson for chronological dating) (see, e.g., \cite{Liiv}). There the goal is to order (seriate)  a set of objects, given through their pairwise (dis)similarities, in such a way that  two objects are ranked close to each other if they have a large correlation/similarity (or a small dissimilarity).

It is a classical observation by Roberts~\cite{Roberts69} that the adjacency matrix of a graph $G$ is Robinsonian if and only if $G$ is a unit interval graph, i.e.,   its vertices can be labeled by unit intervals on a line so that adjacent vertices receive intersecting intervals.
Clearly, the condition (\ref{eq:rob}) implies (\ref{eqineq}) and thus  any Robinson ordering of $A$  is a perfect elimination ordering of $A$.
For adjacency matrices of graphs, this corresponds to the  fact that unit interval graphs are chordal graphs. 
So Robinsonian matrices are weighted graph analogues of unit interval graphs and this fact formed the motivation for the present work to investigate weighted analogues of  chordal graphs.

\smallskip
There is a well known structural characterization of unit interval graphs  in terms of minimal forbidden substructures (namely,
claws and asteroidal triples; see \cite{Gardi,Roberts69}). 
An analogous  structural characterization was given in \cite{Laurent17} for  Robinsonian matrices (by extending the notion of asteroidal triple to weighted graphs). 
For chordal graphs the minimal forbidden substructures are the chordless cycles. This raises the natural question of understanding the minimal forbidden substructures for the class of matrices   admitting   a perfect elimination ordering.
A main contribution of this note is to provide such a structural characterization, in terms of weighted chordless walks, a new key notion which we will introduce below (see Theorem \ref{thmA}).

\medskip
The paper is organized as follows.  Sections \ref{sec21} and \ref{sec22}  contain definitions and  preliminary  results about vertex elimination orderings and   simplicial vertices. In Section \ref{sec23} we present our main structural result (Theorem \ref{thmA}) for matrices admitting a perfect elimination ordering. In Section \ref{sec24} we discuss related notions: common perfect elimination orderings of powers of chordal graphs, distance-preserving elimination orderings of shortest path distance matrices, and conclude with a brief discussion of other graph properties that could be extended to matrices and of related recognition algorithms.
The last Section \ref{secproof} is devoted to the proof of our main structural result in Theorem \ref{thmA}.

\section{Perfect Elimination Orderings of Matrices}\label{sec2}
\subsection{Perfect elimination orderings and simplicial elements}\label{sec21}

Given a graph $G=(V,E)$, recall that  $v\in V$ is  a {\em simplicial vertex} of $G$ if  its  neighbors  form a clique of $G$. Then an order $\pi$ of $V$ is a perfect elimination ordering of $G$  if and only if each vertex $v$ is simplicial in $G[\{x\in V: v\le_\pi x\}]$, the subgraph of $G$ induced by the nodes coming after $v$ in $\pi$.
In the same way, given a   matrix $A\in \mathcal S^V$, an element $v\in V$ is said to be {\em simplicial in $A$} 
if 
\begin{equation}\label{eqsimplicial}
A_{yz}\ge \min\{A_{vy},A_{vz}\}\ \text{ for all distinct } y,z\in V\setminus \{v\}.
\end{equation}
Then an order $\pi$ of $V$ is a perfect elimination ordering of $A$ precisely when each  $v\in V$ is simplicial in $A[\{x\in V: v\le_\pi x\}]$,   the principal submatrix of $A$ indexed by the elements coming after $v$ in $\pi$. 
We next observe that  simplicial elements are precisely those coming first in some perfect elimination ordering.


\begin{lemma}
Assume $A$ has a perfect elimination ordering and let $v\in V$. Then, $v$ is simplicial for $A$ if and only if there exists a perfect elimination ordering of $A$ with $v$ as first element.
\end{lemma}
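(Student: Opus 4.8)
The plan is to prove the two implications separately; the forward implication is an immediate consequence of the definitions, while the reverse implication requires a short construction starting from an existing perfect elimination ordering.

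For the ``if'' direction, I would suppose that $\pi$ is a perfect elimination ordering of $A$ having $v$ as its first element. Then, given any two distinct $y,z\in V\setminus\{v\}$ and ordering them so that $y<_\pi z$, the triple $v<_\pi y<_\pi z$ satisfies the three-points condition (\ref{eqineq}), which yields $A_{yz}\ge\min\{A_{vy},A_{vz}\}$. Since this is exactly (\ref{eqsimplicial}), the element $v$ is simplicial for $A$. I expect this direction to reduce to a one-line verification.

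For the ``only if'' direction, I would start from \emph{any} perfect elimination ordering $\pi$ of $A$, which exists precisely by the standing hypothesis, and define a new ordering $\pi'$ obtained from $\pi$ by moving $v$ to the front while leaving the relative order of all other elements unchanged. The goal is to show that $\pi'$ is again a perfect elimination ordering, i.e. that it satisfies (\ref{eqineq}) for every triple $x<_{\pi'}y<_{\pi'}z$, which I would do by splitting on the smallest element of the triple. If $x=v$, then $y,z\in V\setminus\{v\}$ and the required inequality $A_{yz}\ge\min\{A_{vy},A_{vz}\}$ is exactly the simpliciality of $v$. If $x\neq v$, then since $v$ is first in $\pi'$ all three elements lie in $V\setminus\{v\}$, and because $\pi'$ preserves the $\pi$-order on $V\setminus\{v\}$ we have $x<_\pi y<_\pi z$, so the three-points condition for $\pi$ supplies the inequality directly.

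The only point requiring care, and the closest thing to an obstacle, is the observation that relocating $v$ to the front leaves the relative order of the remaining elements intact, so that each triple avoiding $v$ remains increasing for $\pi$ and the perfect elimination property of $\pi$ transfers verbatim; equivalently, one is using that restricting a perfect elimination ordering to a subset yields a perfect elimination ordering of the corresponding principal submatrix. Once this is recorded, the two cases cover all triples and establish that $\pi'$ is a perfect elimination ordering of $A$ with $v$ as first element, completing the proof.
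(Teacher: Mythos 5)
Your proof is correct, but the ``only if'' direction takes a genuinely different and in fact simpler route than the paper. The paper argues by induction on $n=|V|$: given a perfect elimination ordering $\pi$ starting at some $b\neq v$, it notes that $b$ is simplicial, deletes $b$, applies the induction hypothesis to the principal submatrix $A'$ on $V\setminus\{b\}$ to obtain a perfect elimination ordering of $A'$ starting at $v$, and then re-inserts $b$ in \emph{second} position, verifying the three-points condition by a case analysis that uses the simpliciality of both $v$ and $b$. Your move-to-front construction bypasses the induction entirely: since $v$ is placed first in $\pi'$, every triple $x<_{\pi'}y<_{\pi'}z$ either has $x=v$ (and the required inequality is literally (\ref{eqsimplicial})) or avoids $v$ altogether (since $v$ cannot occur as $y$ or $z$), in which case the unchanged relative order on $V\setminus\{v\}$ lets the three-points condition for $\pi$ apply verbatim; your case split is exhaustive and each case closes. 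What your approach buys is brevity and the observation that the three-points condition constrains a triple only through its smallest element, so prepending a simplicial element adds exactly the simpliciality inequalities and disturbs nothing else; what the paper's approach buys is a construction that perturbs the given ordering minimally (keeping $b$ adjacent to the front) and exercises the companion fact that the first element of any perfect elimination ordering is simplicial, but for this lemma that machinery is not needed. One small remark: your closing aside about restriction to principal submatrices is slightly more than you use --- for triples avoiding $v$ you invoke the original ordering $\pi$ on $V$ directly, which is cleaner.
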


\begin{proof}
The `if' part is clear. We show the `only if part' by induction on the size $n$ of $A$.
The case $n=3$ is clear. 
Assume now that $a$ is simplicial for $A$ and consider a perfect elimination ordering $\pi$ of $A$ starting at $b\neq a$.  Then we know that  $b$ is simplicial in $A$. Consider the submatrix $A'$ induced by $V\setminus \{b\}$. Then $A'$ still has a perfect elimination ordering and $a$ is still simplicial in $A'$. Hence, by the induction assumption, there exists a perfect elimination ordering $\pi'$ of $A'$ starting at $a$, say $\pi'=(a,u,\cdots, w).$
We consider the ordering $\tilde \pi=(a,b,u,\cdots, w)$ obtained by inserting $b$ between $a$ and $u$. We claim that $\tilde \pi$ is a perfect elimination ordering of $A$, that is,
$A_{yz}\ge \min\{A_{xy},A_{xz}\}$ for all $x<_{\tilde \pi} y<_{\tilde \pi} z$. This is true if $b\not\in \{x,y,z\}$ since $\pi'$ is a perfect elimination ordering.
Assume now that $b\in \{x,y,z\}$. Then $b\ne z$ (as $b$ is second in $\tilde \pi$). 
If $b=x$ then the desired inequality follows from the fact that $b$ is simplicial in $A$.
Finally, if $b=y$ then $x=a$ and the desired inequality follows from the fact that $a$ is simplicial in $A$. \qed
\end{proof}

Note that finding a simplicial element in $A$ can be done in $O(n^3)$ operations and thus one can find a perfect elimination ordering of $A$ in $O(n^4)$ operations (or decide that none exists), where $n=|V|$.





\subsection{Common perfect elimination orderings }\label{sec22}

Let $\alpha_0< \alpha_1<\alpha_2 <\cdots < \alpha _L$ denote the distinct values taken by the entries of a matrix $A\in \mathcal S^V$
and, for $\ell =0,1,\cdots, L$, define its  {\em level graph} $G_\ell=(V,E_\ell)$, whose edges are the pairs $\{x,y\}$ with $A_{xy}\ge \alpha_\ell$. Thus,  $(V,E_0)$ is the complete graph on $V$ (i.e., $A_{G_0}$ is the all-ones matrix)  and $E_L\subseteq \cdots \subseteq E_1$. 
It is easy to check that (up to shifting  all entries of $A$ by $\alpha_0$ and assuming all its diagonal entries  are zero) 
 $A$ can be decomposed as a conic combination of the adjacency matrices of its level graphs:
\begin{equation}\label{eqdec}
A- \alpha_0 A_{G_0}= 
 \sum_{\ell=1}^L (\alpha_{\ell}-\alpha_{\ell-1}) A_{G_{\ell}}
\end{equation}
 As a direct  application we have the following characterization.

\begin{lemma}
A matrix $A$ has a perfect elimination ordering if and only if 
 there exists  an ordering $\pi$ of $V$ which is a common perfect elimination ordering of all the level graphs of $A$.
\end{lemma}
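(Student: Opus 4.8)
The plan is to prove the equivalence level by level, exploiting the fact that the three-points condition (\ref{eqineq}) for $A$ decouples into the perfect elimination ordering conditions for the individual level graphs $G_\ell$. The crucial remark is that every statement of the form $A_{yz}\ge \min\{A_{xy},A_{xz}\}$ can be rephrased purely in terms of the edge sets $E_\ell$, since $A_{xy}\ge \alpha_\ell$ holds exactly when $\{x,y\}\in E_\ell$. So I would translate the numerical inequality into membership in the appropriate threshold graph and transfer the property back and forth.

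For the forward implication, I would assume that $\pi$ is a perfect elimination ordering of $A$, fix a level $\ell\in\{0,1,\dots,L\}$, and take $x<_\pi y<_\pi z$ with $\{x,y\},\{x,z\}\in E_\ell$, i.e.\ $A_{xy}\ge \alpha_\ell$ and $A_{xz}\ge \alpha_\ell$. Then $\min\{A_{xy},A_{xz}\}\ge \alpha_\ell$, and (\ref{eqineq}) yields $A_{yz}\ge \min\{A_{xy},A_{xz}\}\ge \alpha_\ell$, so that $\{y,z\}\in E_\ell$. This shows $\pi$ is a perfect elimination ordering of $G_\ell$, and since $\ell$ was arbitrary, $\pi$ is a common perfect elimination ordering of all level graphs.

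For the converse, I would assume that $\pi$ is a common perfect elimination ordering of all the level graphs, fix $x<_\pi y<_\pi z$, and set $m=\min\{A_{xy},A_{xz}\}$. Since the minimum of two entries of $A$ is again an entry of $A$, we have $m=\alpha_\ell$ for some level $\ell$; hence $A_{xy}\ge \alpha_\ell$ and $A_{xz}\ge \alpha_\ell$, so both $\{x,y\}$ and $\{x,z\}$ are edges of $G_\ell$. Applying the perfect elimination ordering property of $\pi$ for $G_\ell$ gives $\{y,z\}\in E_\ell$, that is, $A_{yz}\ge \alpha_\ell=m=\min\{A_{xy},A_{xz}\}$. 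Thus $\pi$ satisfies (\ref{eqineq}) and is a perfect elimination ordering of $A$.

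I do not expect a genuine obstacle here: the argument is a straightforward thresholding translation between the numerical inequality defining a perfect elimination ordering of $A$ and the combinatorial edge condition defining a perfect elimination ordering of each $G_\ell$. The only point to verify carefully is that $\min\{A_{xy},A_{xz}\}$ is itself one of the attained values $\alpha_\ell$ of $A$, so that in the converse direction one lands exactly on a level graph rather than strictly between two consecutive thresholds; this holds precisely because the minimum of two matrix entries is again a matrix entry.
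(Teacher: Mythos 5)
Your proof is correct and is precisely the ``direct application'' of the level-graph decomposition that the paper leaves implicit (it states the lemma without proof right after equation~(\ref{eqdec})): the forward direction thresholds the inequality~(\ref{eqineq}) at each $\alpha_\ell$, and the converse uses that $\min\{A_{xy},A_{xz}\}$, being itself an entry of $A$, equals some attained level $\alpha_\ell$. Your closing remark correctly identifies the one point needing care, so nothing is missing.
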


In other words, a necessary condition for $A$ to have a perfect elimination ordering is that all its level graphs be chordal, however for finding  a perfect elimination ordering of $A$ one needs to find  an ordering which is a perfect elimination ordering {\em simultaneoulsy}  for all its level graphs.


Clearly we may assume without loss of generality that $\alpha_0=0$.  Moreover,  the  exact values of $\alpha_0,\cdots,\alpha_L$  are not important (as long as they are strictly increasing).
For instance, $A$ has a perfect elimination ordering if and only if the matrix 
$\widetilde A= \sum_{\ell=1}^L A_{G_{\ell}}$ does too. Hence the question  whether a matrix has a perfect elimination ordering is equivalent to asking whether a  finite {\em monotone} family of graphs $G_1\supseteq G_2\supseteq \dots \supseteq G_L$  admits a common perfect elimination ordering. We will come back to this in Section~\ref{secchordalpower}.

Finally observe  that any {\em  arbitrary} order of $V$ is a perfect elimination ordering of $A$ precisely when all its level graphs are disjoint unions of cliques. Such (similarity) matrices $A$ correspond thus to distance matrices $D$ of ultrametrics, via the correspondence $A=-D$.

\subsection{Structural characterization of matrices with  perfect elimination orderings}\label{sec23}

We now describe the structural obstructions for the symmetric matrices admitting a perfect elimination ordering. First we introduce some notation.

A {\em walk} is an ordered sequence $W=(v_0,v_1,\cdots, v_p)$ of elements of $V$. 
Then we set $V(W)=\{v_0,v_1,,\cdots,v_p\}$, $I(W)=\{v_i: 1\le i\le p-1\}$ is the set of  {\em internal} elements of $W$,  $v_0$ and $v_p$ are its {\em end points}.
 The walk $W$ is said to be {\em closed} if $v_0=v_p$ and the walk $W$ is said to be {\em self-contained}  if $I(W)=V(W)$.
A closed walk is called a {\em cycle}
if $v_0,v_2,\cdots, v_{p-1}$ are all distinct.
%



The following  notion 
will play a key role in our structural characterization:
A walk $W=(v_0,v_1,\cdots, v_p)$ is said to be {\em weighted chordless} in $A$ if 
\begin{equation}\label{eqwalk}
A_{v_{i-1}v_{i+1}}<\min\{A_{v_{i-1}v_{i}},A_{v_{i+1}v_{i}}\} \ \text{ for all } 1\le i\le p-1.
\end{equation}
In addition,  $W$ is said to be a  {\em weighted chordless cycle} in $A$ if $W$ is a cycle  which, in addition to (\ref{eqwalk}), also satisfies the inequality
\begin{equation}\label{eqcycle}
A_{v_{p-1}v_1} <\min\{A_{v_{p-1}v_0},A_{v_0v_1}\}.
\end{equation}
Hence a walk $(y,x,z)$ is  weighted chordal   precisely when the  triple $(x,y,z)$  violates the inequality in (\ref{eqineq}) and thus its internal element $x$ cannot come before both $y,z$ in any perfect elimination ordering of $A$.

 It is useful to compare with the  notion of chordless cycle in graphs. 
Let $A=A_G$ be the adjacency matrix of a graph $G=(V,E)$. Then, a walk $W=(v_1,\cdots,v_p)$ is   {\em weighted chordless}  in $A_G$ precisely when all the 2-chords $\{v_i,v_{i+2}\}$ ($1\le i\le p-2$) are not edges of $G$. Therefore, either $W$ is an induced walk in $G$ (i.e., none of the chords $\{v_i,v_j\}$ ($1\le i$, $i+2\le j\le p$) is an edge of $G$), or $W$ contains a chordless cycle of $G$ (meaning $V(W)$ contains a subset inducing a chordless cycle in $G$). In particular, if $W$ is  a weighted chordless cycle in $A_G$ then $W$ is equal to or contains a chordless cycle of $G$.

By definition, chordal graphs are exactly the graphs that have no chordless cycle of length at least 4.
It is natural to ask whether a similar structural characterization holds for matrices.
We start with some easy observations.

\begin{lemma}
\label{prop:cycle}
Consider a matrix $A\in \mathcal S^V$. If  (i)  $A$ has a perfect elimination ordering then  (ii) $A$ has no weighted chordless cycle, which in turn implies that (iii) every level graph of $A$ is a chordal graph.
\end{lemma}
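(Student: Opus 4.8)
The plan is to prove the two implications (i) $\Rightarrow$ (ii) and (ii) $\Rightarrow$ (iii) separately, arguing each time by contraposition.

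For (i) $\Rightarrow$ (ii), I would assume that $A$ admits a perfect elimination ordering $\pi$ and suppose for contradiction that $W=(v_0,v_1,\dots,v_p)$ (with $v_p=v_0$ and $v_0,\dots,v_{p-1}$ distinct) is a weighted chordless cycle. The key is the observation already recorded just after (\ref{eqcycle}): reading the defining inequalities (\ref{eqwalk}) and (\ref{eqcycle}) as violations of the three-points condition (\ref{eqineq}), each triple $(v_{i-1},v_i,v_{i+1})$ (indices modulo $p$) tells us that $v_i$ cannot precede \emph{both} of its cycle-neighbours $v_{i-1}$ and $v_{i+1}$ in $\pi$; equivalently, every vertex of the cycle is preceded in $\pi$ by at least one of its two neighbours. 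I would then simply take the $\pi$-smallest vertex $v_j$ of $\{v_0,\dots,v_{p-1}\}$: by minimality it precedes both $v_{j-1}$ and $v_{j+1}$, which is exactly the forbidden situation, giving the desired contradiction.

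For (ii) $\Rightarrow$ (iii), I would again argue contrapositively: assume some level graph $G_\ell$ is not chordal, so it contains a chordless cycle $C=(u_0,u_1,\dots,u_{k-1})$ of length $k\ge 4$, and show that the associated closed walk $W=(u_0,u_1,\dots,u_{k-1},u_0)$ is a weighted chordless cycle of $A$. Here I would invoke the definition of the level graph: every cycle edge $\{u_i,u_{i+1}\}$ satisfies $A_{u_iu_{i+1}}\ge \alpha_\ell$, while every chord (non-consecutive pair) $\{u_i,u_j\}$ satisfies $A_{u_iu_j}<\alpha_\ell$. Since $k\ge 4$, each pair $\{u_{i-1},u_{i+1}\}$ as well as the pair $\{u_{k-1},u_1\}$ is a genuine chord of $C$, so its $A$-value is $<\alpha_\ell$ while the values on the two adjacent cycle edges are $\ge\alpha_\ell$; this is precisely (\ref{eqwalk}) together with (\ref{eqcycle}). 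Hence $W$ witnesses a weighted chordless cycle, contradicting (ii).

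The routine part is the index bookkeeping, namely checking that all the pairs $\{u_{i-1},u_{i+1}\}$ and $\{u_{k-1},u_1\}$ really are chords of $C$; this is exactly where the hypothesis $k\ge 4$ enters, since for a triangle these pairs would be edges. The only genuinely conceptual point is the minimal-vertex argument in (i) $\Rightarrow$ (ii), and even there most of the work has already been done once (\ref{eqwalk})--(\ref{eqcycle}) are read as a family of three-points violations, so I expect no serious obstacle. The main care needed is to treat the wrap-around triple $(v_{p-1},v_0,v_1)$ coming from (\ref{eqcycle}) on exactly the same footing as the internal triples coming from (\ref{eqwalk}), so that the minimality argument applies uniformly to all $p$ vertices of the cycle.
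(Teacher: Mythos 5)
Your proposal is correct and takes essentially the same route as the paper: for (i) $\Rightarrow$ (ii) the paper likewise picks the element $v_i$ of the cycle coming first in $\pi$ and derives the contradiction from the weighted chordless walk $(v_{i-1},v_i,v_{i+1})$, and for (ii) $\Rightarrow$ (iii) it likewise observes that a chordless cycle in a level graph is a weighted chordless cycle of $A$. Your explicit treatment of the wrap-around triple and of the $k\ge 4$ bookkeeping merely spells out details the paper leaves implicit.
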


\begin{proof} (i) $\Longrightarrow$ (ii): 
Assume $\pi$ is a perfect elimination ordering of $A$,  $W$ is a weighted chordless cycle in $A$ and $v_i$ is the element of $W$ coming first  in $\pi$. As $(v_{i-1},v_i,v_{i+1})$ is a weighted chordless walk we get a contradiction. \\ 
(ii) $\Longrightarrow$ (iii): Assume  $C=(v_1,\cdots,v_p)$ is a chordless cycle in some level graph $G^{(l)}$ of $A$, i.e., $A_{v_iv_{i+1}}\ge \alpha_l$ for $i=1,\cdots,p$, while $A_{v_iv_j}<\alpha_l$ whenever $|i-j|\ge 2$. Then $C$ is a weighted chordless cycle in $A$, contradicting (ii). \qed
\end{proof}
The reverse implications are not true in general. See Figure~\ref{fig:1} for examples.
%
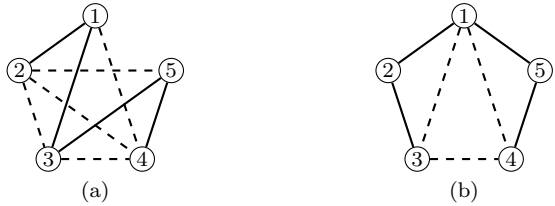
\begin{figure}[ht]
\centering
\begin{minipage}{0.4\textwidth}
\centering
\begin{tikzpicture}[scale=0.7]

\def \n {5}
\def \radius {1.5cm}
\def \margin {10} 

\foreach \s in {1,...,\n}
{
  \node[draw, circle, inner sep=0.9pt,minimum size=1pt] (\s) at ({360/\n * (\s - 1)+90}:\radius) {$\s$};
}
\foreach  \x/\y in {1/2,1/3,3/5,4/5}
{
\draw[thick] (\x) -- (\y);
}
\foreach \x/\y in {1/4,2/4,2/3,3/4,2/5}
{
\draw[thick,dashed] (\x) -- (\y);
}
\end{tikzpicture}
\par
(a)
\end{minipage}
\begin{minipage}{0.4\textwidth}
\centering
\begin{tikzpicture}[scale=0.7]

\def \n {5}
\def \radius {1.5cm}
\def \margin {10} 

\foreach \s in {1,...,\n}
{
  \node[draw, circle, inner sep=0.9pt,minimum size=1.5pt] (\s) at ({360/\n * (\s - 1)+90}:\radius) {$\s$};
}
\foreach  \x/\y in {1/2,2/3,4/5,1/5}
{
\draw[thick] (\x) -- (\y);
}
\foreach \x/\y in {1/3,1/4,3/4}
{
\draw[thick,dashed] (\x) -- (\y);
}
\end{tikzpicture}
\par
(b)
\end{minipage}
\caption{Visualization of  two $\{0,1,2\}$-symmetric matrices of size 5 as weighted graphs, 
where  bold edges have weight 2,  dashed edges have weight 1 and non-edges have weight 0.
(a) An example of matrix which   has  no weighted chordless cycle 
 (because $(2,1,3), (1,2,5), (1,3,5), (1,4,5), (3,5,4)$ are all the chordless 2-walks  and  they cannot be concatenated to build a weighted chordless  cycle) and also   no  simplicial vertex (and thus no perfect elimination ordering). (b) An example where all  level graphs are chordal, but $(1,2,3,4,5)$  is  a weighted chordless cycle.
}
\label{fig:1}
\end{figure}

Although the matrix in Figure~\ref{fig:1}(a) has no weighted chordless cycle, it yet contains a forbidden structure for perfect elimination orderings, namely  $(1,4,5,3,1,2,5)$ is a self-contained weighted chordless walk in the matrix.
Recall that a  walk  $W$ is {\em self-contained} if  $V(W)=I(W)$. 
More generally,   a family $\{W_1,\cdots,W_k\}$ of  walks  is {\em self-contained} if $\cup_{h=1}^k V(W_h)=\cup_{h=1}^k I(W_h)$.
Figure~\ref{fig:2} gives an example having no simplicial vertex and also no self-contained weighted chordless walk,
so forbidding a single self-contained chordless walk is not sufficient to guarantee a perfect elimination ordering;m we need to forbid families of them. 

\begin{figure}[ht]
\centering
\begin{tikzpicture}[scale=0.7]

\def \n {6}
\def \radius {1.5cm}
\foreach \s in {1,...,\n}
{
  \node[draw, circle, inner sep=0.9pt,minimum size=1.5pt] (\s) at ({360/\n * (\s - 1)+90}:\radius) {$\s$};
}
\foreach  \x/\y in {1/2,1/3,4/6,5/6}
{
\draw[thick] (\x) -- (\y);
}
\foreach \x/\y in {1/4,1/5,2/6,3/6,2/3,3/4,4/5,2/4,2/5,3/5,4/5}
{
\draw[thick,dashed] (\x) -- (\y);
}
\end{tikzpicture}
\caption{Visualization of a $\{0,1,2\}$-symmetric matrix of size 6 as a weighted graph, 
where  bold edges have weight 2, dashed edges have weight 1 and non-edges have weight 0.
This matrix   has no simplicial vertex and no self-contained  weighted chordless walk 
(because $(2,1,3), (1,2,6), (1,3,6), (1,4,6), (1,5,6),  (4,6,5)$ are all the chordless 2-walks  but they cannot be concatenated to a build a self-contained weighted chordless walk).}
\label{fig:2}
\end{figure}
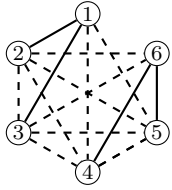

\begin{lemma}\label{prop:1}
If  $A$ has a self-contained family of weighted chordless walks, then $A$ does not have a perfect elimination ordering.
\end{lemma}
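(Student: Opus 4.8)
The plan is to argue by contradiction, leaning on the single observation already recorded after the definition of weighted chordless walks: whenever $(v_{i-1},v_i,v_{i+1})$ is a weighted chordless segment, so that $A_{v_{i-1}v_{i+1}}<\min\{A_{v_{i-1}v_i},A_{v_iv_{i+1}}\}$, its internal vertex $v_i$ cannot precede both neighbours $v_{i-1}$ and $v_{i+1}$ in any perfect elimination ordering; indeed, if it did, the triple $x=v_i$, $\{y,z\}=\{v_{i-1},v_{i+1}\}$ would violate the three-points condition~(\ref{eqineq}). Accordingly I would suppose, for contradiction, that $A$ admits a perfect elimination ordering $\pi$ while also possessing a self-contained family $\{W_1,\dots,W_k\}$ of weighted chordless walks.

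The heart of the argument is to read off a single forbidden triple from the $\pi$-minimum of the whole family. Set $U=\bigcup_{h=1}^k V(W_h)$ and let $v^\ast$ be the $\pi$-smallest element of $U$. Self-containedness gives $U=\bigcup_{h=1}^k I(W_h)$, so $v^\ast$ is an internal vertex of at least one walk; write $v^\ast=v_i$ with $W_h=(v_0,\dots,v_p)$ and $1\le i\le p-1$. Both walk-neighbours $v_{i-1}$ and $v_{i+1}$ lie in $V(W_h)\subseteq U$, and as $v^\ast$ is the strict $\pi$-minimum of $U$ we obtain $v_i<_\pi v_{i-1}$ and $v_i<_\pi v_{i+1}$. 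Thus the internal vertex $v_i$ precedes both its neighbours on a weighted chordless segment, contradicting the observation of the previous paragraph; this rules out $\pi$ and establishes the lemma.

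The one delicate point --- and the step I would flag as the main, if minor, obstacle --- is to guarantee that $(v_{i-1},v_i,v_{i+1})$ really is a triple of three distinct elements, since otherwise the three-points condition~(\ref{eqineq}) does not apply verbatim. Consecutive distinctness $v_{i-1}\neq v_i\neq v_{i+1}$ is forced by~(\ref{eqwalk}): equality would make the defining strict inequality compare an off-diagonal entry with itself. The remaining possibility $v_{i-1}=v_{i+1}$ is the genuine borderline case, in which~(\ref{eqwalk}) degenerates into a strict inequality involving the diagonal entry $A_{v_{i-1}v_{i-1}}$; I would dispose of it by the convention (harmless since diagonal entries play no role in~(\ref{eqineq})) that diagonal entries exceed all off-diagonal ones, which makes that inequality unsatisfiable and hence forces $v_{i-1}\neq v_{i+1}$ at every internal vertex. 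With genuine triples secured, the minimality contradiction above goes through unchanged. \qed
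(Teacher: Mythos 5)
Your proof is correct and takes essentially the same route as the paper, which disposes of the lemma by taking the $\pi$-first element of the family, using self-containedness to see that it is internal to some walk, and contradicting the three-points condition (\ref{eqineq}) exactly as in the implication (i) $\Longrightarrow$ (ii) of Lemma~\ref{prop:cycle}. Your additional care about the degenerate case $v_{i-1}=v_{i+1}$, resolved by the harmless convention on diagonal entries, tidies a corner case the paper passes over silently but does not alter the argument.
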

\begin{proof}
Same proof as for the implication (i) $\Longrightarrow$ (ii)  in  Lemma~\ref{prop:cycle}.
\qed
\end{proof}


\begin{lemma}\label{lem:simplicial}
Assume $A$ does not contain any self-contained family of weighted chordless walks. Then $A$ has a simplicial vertex.
\end{lemma}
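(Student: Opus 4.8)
The plan is to prove the contrapositive: assuming that $A$ has \emph{no} simplicial vertex, I will exhibit a self-contained family of weighted chordless walks, which by Lemma~\ref{prop:1} rules out a perfect elimination ordering and contradicts nothing here but directly contradicts the hypothesis of Lemma~\ref{lem:simplicial} as stated.

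First I would unpack what failure of simpliciality buys at each vertex. By the defining inequality~(\ref{eqsimplicial}), if $v\in V$ is not simplicial then there exist distinct $y,z\in V\setminus\{v\}$ with $A_{yz}<\min\{A_{vy},A_{vz}\}$. Using the symmetry of $A$, this is precisely the statement that the length-$2$ walk $W_v:=(y,v,z)$ satisfies the weighted-chordless condition~(\ref{eqwalk}) at its unique internal index, so $W_v$ is a weighted chordless walk with $I(W_v)=\{v\}$. This is exactly the observation already recorded in the text, namely that a walk $(y,x,z)$ is weighted chordless iff the triple $(x,y,z)$ violates~(\ref{eqineq}); the only new content is to read it as a per-vertex witness.

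Next I would collect these witnesses. Since no vertex is simplicial, every $v\in V$ supplies such a $W_v$, and I form the finite family $\mathcal W=\{W_v : v\in V\}$. Because $v\in I(W_v)$ for every $v$, the internal vertices already exhaust $V$, i.e.\ $\bigcup_v I(W_v)=V$. On the other hand each $W_v$ uses only elements of $V$, so $\bigcup_v V(W_v)\subseteq V=\bigcup_v I(W_v)$; together with the always-valid inclusion $I(W_h)\subseteq V(W_h)$ this forces $\bigcup_v V(W_v)=\bigcup_v I(W_v)$. Hence $\mathcal W$ is self-contained, contradicting the assumption, and the contrapositive is established.

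The step I would flag as the conceptual crux, even though it is short, is recognizing that one must forbid a \emph{family} rather than a single walk. As Figure~\ref{fig:2} illustrates, a matrix can lack simplicial vertices while admitting no single self-contained weighted chordless walk: the length-$2$ witnesses for different vertices need not concatenate into one walk, yet they assemble into a self-contained family for the trivial reason that each contributes its own internal vertex. Beyond this, the proof is pure bookkeeping on the sets $V(W_v)$ and $I(W_v)$, so I anticipate no genuine technical obstacle; the substance lies entirely in the contrapositive reformulation and in choosing a family (rather than a walk or a cycle) as the object to forbid.
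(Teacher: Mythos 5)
Your proof is correct and is essentially identical to the paper's own argument: the paper likewise argues by contradiction, takes for each non-simplicial $x$ a witness walk $P_x=(y,x,z)$, and observes that $\{P_x : x\in V\}$ is a self-contained family since every vertex occurs as an internal element. Your bookkeeping verifying $\bigcup_v V(W_v)=\bigcup_v I(W_v)$ just makes explicit what the paper leaves implicit.
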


\begin{proof}
Suppose for contradiction that $A$ does not have a simplicial vertex. That is, for any $x\in V$ there exist $y\ne z\in V\setminus \{x\}$ such that 
$A_{yz}<\min\{A_{xy},A_{xz}\},$ 
i.e.,  $P_x:=(y,x,z)$ is a weighted chordless walk.
Then we have a self-contained  family  $\{P_x: x\in V\}$ of weighted chordless walks in $A$. \qed
\end{proof}

\begin{corollary}\label{theochordless}
A  matrix $A$  has a perfect elimination ordering if and only if there does not exist a
self-contained  family of weighted chordless walks in $A$.
\end{corollary}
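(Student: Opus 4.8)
The plan is to establish Corollary~\ref{theochordless} by combining Lemma~\ref{prop:1} and Lemma~\ref{lem:simplicial} with an inductive argument that peels off simplicial vertices one at a time. The statement is an equivalence, so I would prove the two directions separately.

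For the forward direction, suppose $A$ has a perfect elimination ordering; I must show $A$ contains no self-contained family of weighted chordless walks. This is exactly the contrapositive of Lemma~\ref{prop:1}, so this direction is immediate: if such a self-contained family existed, Lemma~\ref{prop:1} would say $A$ has no perfect elimination ordering, a contradiction.

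The substantive direction is the converse: assuming $A$ contains no self-contained family of weighted chordless walks, I must construct a perfect elimination ordering. The natural approach is induction on $n=|V|$, the size of $A$. The base case (say $n\le 2$, where the three-points condition is vacuous) is trivial. For the inductive step, I would first invoke Lemma~\ref{lem:simplicial} to obtain a simplicial vertex $v$ of $A$. The idea is then to place $v$ first in the ordering and recurse on the principal submatrix $A'=A[V\setminus\{v\}]$, appealing to the characterization (from Section~\ref{sec21}) that an order is a perfect elimination ordering precisely when each element is simplicial in the submatrix induced by the elements coming after it. Concretely, if $\pi'$ is a perfect elimination ordering of $A'$, then prepending $v$ yields an ordering $\pi=(v,\pi')$ of $V$; since $v$ is simplicial in $A$ it satisfies the three-points condition as the first element, and the remaining triples are handled by $\pi'$ being a perfect elimination ordering of $A'$.

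The main obstacle, and the step requiring care, is verifying that the inductive hypothesis applies to $A'$, i.e., that $A'$ also contains no self-contained family of weighted chordless walks. This does not follow automatically, because a self-contained family in $A'$ would in particular be a self-contained family in $A$ (every walk in $A'$ avoids $v$ and hence is a walk in $A$, and the self-containment condition $\bigcup_h V(W_h)=\bigcup_h I(W_h)$ is inherited verbatim). Thus any self-contained family of weighted chordless walks in $A'$ is already a self-contained family in $A$, contradicting our assumption on $A$. This monotonicity under taking principal submatrices is precisely what makes the induction go through, and is the key observation to spell out carefully. With this in hand, the induction closes and both directions together yield the stated equivalence.
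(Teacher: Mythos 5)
Your proposal is correct and follows essentially the same route as the paper: the forward direction is exactly Lemma~\ref{prop:1}, and the converse is the same induction that uses Lemma~\ref{lem:simplicial} to extract a simplicial vertex, prepends it, and recurses on the principal submatrix. Your explicit verification that the absence of self-contained families is inherited by principal submatrices (since any weighted chordless walk in $A[V\setminus\{v\}]$ is one in $A$, and self-containment is preserved verbatim) is a point the paper leaves implicit, and you spell it out correctly.
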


\begin{proof}
The `only if' part is shown in Lemma~\ref{prop:1}.
We now show the `if part',
using induction on the size $n$  of $A$. 
The base case $n\le 3$ is trivial. So let us assume that $A$ has size $n\ge 4$. In view of Lemma \ref{lem:simplicial}, $A$ has a simplicial vertex $v_1$. 
Consider now the principal submatrix $A_1$ of $A$ indexed by $V\setminus \{v_1\}$. 
By the induction assumption, $A_1$ has a perfect elimination ordering $\pi_1$. Then, appending $v_1$ as first element before $\pi_1$, we get the ordering $\pi=(v_1,\pi_1)$ of $V$, which is a perfect elimination ordering of $A$. This concludes the proof. \qed
%
%
\end{proof}


The argument used  in the proof of 
Lemma~\ref{lem:simplicial} is the matrix analogue of  the well known  fact that a graph has no simplicial vertex if and only if each vertex is  the midpoint of an induced $P_3$ (a path with three vertices).
Dirac's theorem informally says  that some of these induced $P_3$'s can be  assembled to form a chordless cycle. 
As a matrix analogue, we will show that it suffices to exclude  self-contained {\em pairs of two} weighted chordless walks.
This is our main structural characterization result,  which we will prove in Section \ref{secproof} below, since the technical details are more involved.


\begin{theorem}\label{thmA}
A symmetric matrix $A$ has a perfect elimination ordering if and only  if $A$ has no self-contained pair of weighted chordless walks.
\end{theorem}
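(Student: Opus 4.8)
The plan is to prove the nontrivial `only if' direction by contrapositive, strengthening Corollary~\ref{theochordless}: I will show that if $A$ has \emph{any} self-contained family of weighted chordless walks, then it already has a self-contained \emph{pair}. Combined with Corollary~\ref{theochordless} (which says the absence of self-contained families is equivalent to having a perfect elimination ordering) and Lemma~\ref{prop:1} (the `if' direction, since a pair is a special family), this yields the theorem. So the real content is a reduction from arbitrary-size self-contained families down to families of size two.

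The starting point is the family $\{P_x : x \in V\}$ of chordless $2$-walks $P_x = (y,x,z)$ produced in the proof of Lemma~\ref{lem:simplicial}: whenever $A$ has no simplicial vertex, every element $x$ is the internal vertex of some such walk, so this family is automatically self-contained. The task is then to merge and extend these short walks into just two longer weighted chordless walks whose internal vertices together cover all of $V$. The natural mechanism is concatenation: given two weighted chordless walks $W_1 = (\dots, u)$ and $W_2 = (u, \dots)$ sharing an endpoint $u$, I want to splice them at $u$ into one longer weighted chordless walk, which requires checking the inequality~(\ref{eqwalk}) at the new internal vertex $u$, i.e.\ that the two edges incident to $u$ coming from $W_1$ and $W_2$ form a chordless $2$-walk through $u$. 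First I would set up this splicing operation carefully and record exactly which compatibility condition at the junction must hold.

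The main obstacle, and where the technical difficulty announced in the excerpt lives, is controlling how these short walks can be assembled so that the final \emph{pair} still covers every vertex internally. One cannot freely concatenate: the junction condition may fail for an arbitrary choice of $P_x$, so I expect to need a selection/combinatorial argument. The plan is to build an auxiliary directed structure on $V$ encoding, for each vertex $x$, a compatible choice of chordless $2$-walk through it, then decompose the arising graph into at most two walks (or two families that can each be merged into a single walk) whose internal-vertex sets union to $V$. A clean way to force the count down to two is a parity or two-coloring argument: traverse the structure so that vertices alternate between being internal and being endpoints across the two walks, ensuring every vertex lands internally in at least one of them. Making this decomposition robust against the failure of individual junction conditions — i.e.\ guaranteeing that whenever a splice is blocked, the blocking $2$-chord itself supplies an alternative walk keeping the family self-contained while reducing its size — is the crux, and is presumably why the authors defer it to Section~\ref{secproof}.

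I would therefore organize the argument as: (1) reduce to the no-simplicial-vertex case and produce the canonical family $\{P_x\}$; (2) prove a splicing lemma characterizing when two weighted chordless walks concatenate; (3) run an induction or extremal argument on the number of walks in a self-contained family, using splicing to merge two walks into one whenever possible and otherwise exploiting the obstruction to rewrite the family, until exactly two walks remain; and (4) verify the resulting pair is self-contained and weighted chordless. The anticipated hard step is (3), the descent to two walks, where the interplay between the strict inequalities in~(\ref{eqwalk}) and the covering requirement $\bigcup_h V(W_h) = \bigcup_h I(W_h)$ must be handled simultaneously.
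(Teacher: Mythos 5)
Your framing is sound up to the decisive step, and in fact reproduces the paper's own reduction: combining Corollary~\ref{theochordless} with greedy elimination of simplicial vertices, Theorem~\ref{thmA} reduces to showing that a matrix with no simplicial vertex contains a self-contained \emph{pair} of weighted chordless walks --- which is exactly the contrapositive of the paper's Theorem~\ref{thmB}. Your splicing condition is also stated correctly: concatenating $W_1=(\dots,y,x)$ and $W_2=(x,z,\dots)$ yields a weighted chordless walk precisely when $(y,x,z)$ itself satisfies (\ref{eqwalk}). But step (3), the descent from the canonical family $\{P_x\}$ to two walks, is where the entire content of the theorem lives, and your proposal offers no argument for it, only named hopes. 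Two specific assertions would fail as stated. First, when a splice at a junction is blocked, the obstruction is $A_{yz}\ge \min\{A_{xy},A_{xz}\}$ --- a \emph{lower} bound on an entry, whereas weighted chordless walks are defined by strict \emph{upper} bounds; a large entry tends to destroy chordlessness of walks through $y,z$ rather than create it, so the claim that ``the blocking 2-chord itself supplies an alternative walk'' has no justification. Second, the parity/two-coloring idea has no candidate invariant: the natural auxiliary structure is a digraph on \emph{ordered pairs}, with an arc from $(y,x)$ to $(x,z)$ whenever $(y,x,z)$ is weighted chordless, and covering every vertex as the center of some traversed triple by at most two walks is an Eulerian-type covering problem for which there is no parity reason the answer is bounded by two. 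That the bound two nevertheless holds is the theorem; asserting it via an unspecified ``selection/combinatorial argument'' restates the difficulty rather than resolving it.

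The paper's actual proof signals that this local merging approach is unlikely to close without substantial new machinery. It does not manipulate the family $\{P_x\}$ at all; instead it strengthens the statement to Theorem~\ref{thmC} --- a matrix with no self-contained pair has either a \emph{critical walk} or two simplicial vertices $u,v$ with $A_{uv}=\min A$ --- and proves that by induction on principal submatrices, using separations $(X,Y)$ at the minimum entry (Definition~\ref{defseparation}), walks satisfying property (P) of Lemma~\ref{lempath} that are internally disjoint from the separator $X\cap Y$, and the dichotomy (P1)/(P2) of Lemma~\ref{lemP12}. The self-contained pair arises only globally, in Case~3 of that proof, assembled from two chordless walks rooted in $X\cap Y$ together with connecting walks guaranteed by (P); and the need to carry the strengthened invariant (critical walks, the minimum-distance simplicial pair) through the induction is exactly the bookkeeping your scheme has no counterpart for. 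So the proposal correctly isolates the crux and correctly reduces to it, but leaves it unproven; as it stands there is a genuine gap at step (3).
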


Note that the matrix from  Figure~\ref{fig:2}  has a self-contained pair of two weighted chordless walks, namely $W_1=(6,2,1,3,6)$ and $W_2=(1,4,6,5,1)$ (note that  $6\in I(W_2)$ and $1\in I(W_1)$).

\subsection{Applications and related concepts}\label{sec24}



\subsubsection{Common perfect elimination orderings of  powers of chordal graphs}\label{secchordalpower}

Given a graph   $G=(V,E)$ let $d_G$ be its shortest path metric, with associated distance matrix $D_G$.
For a positive integer $k$, the {\em $k$-th power} $G^k$ is the graph on $V$, whose edges are the pairs $\{u,v\}$ with distance $d_G(u,v)\le k$. 
So we have a monotone graph family: $G^1\subseteq \cdots \subseteq G^k$.
Duchet~\cite{Duchet84} shows that if $G^k$ is chordal then so is $G^{k+2}$.
Hence if $G$ and $G^2$ are chordal then all  powers of $G$ are chordal.
Dragan et al.~\cite{Dragan92} 
prove that if $G$ and $G^2$ are chordal then they admit a common common perfect elimination ordering (see also \cite[Thm 5]{Brandstadt98}), 
and Brabdst{\"a}d et al.~\cite{Brandstadt96} prove that, for any integers $i_1,\dots, i_k$, $G^{i_1}, \dots, G^{i_k}$ admit a common elimination ordering if 
they are all chordal.
Consequently, the following holds.

\begin{theorem}[Brabdst{\"a}d, Chepoi and Dragan~\cite{Brandstadt96}]\label{thm:power}
If $G$ and $G^2$ are chordal, then all  the powers of $G$ admit a common perfect elimination ordering.
\end{theorem}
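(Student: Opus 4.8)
\textbf{Proof proposal for Theorem~\ref{thm:power}.}

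The plan is to reduce the statement to the earlier framework of this paper by translating ``common perfect elimination ordering of a monotone family of graphs'' into ``perfect elimination ordering of a single symmetric matrix,'' and then to invoke the cited results of Duchet, Dragan et al., and Brandst{\"a}dt--Chepoi--Dragan as black boxes. The key observation recorded in Section~\ref{sec22} is that finding a common perfect elimination ordering of a monotone family $G^1 \supseteq G^2 \supseteq \cdots$ is exactly the question of finding a perfect elimination ordering of the distance matrix $D_G$ (equivalently, of $A = -D_G$), whose level graphs are precisely the powers $G^k$. So the theorem is really a statement about the existence of a single vertex ordering that simultaneously works for all powers.

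First I would establish the hypothesis needed to apply the structural results: assuming $G$ and $G^2$ are chordal, Duchet's theorem~\cite{Duchet84} (``$G^k$ chordal $\Rightarrow$ $G^{k+2}$ chordal'') applied inductively to the two parities shows that \emph{every} power $G^k$ is chordal. This is the essential input, since chordality of all powers is a necessary precondition for them to share a common ordering. Second, with all powers chordal, I would apply the result of Brandst{\"a}dt, Chepoi and Dragan~\cite{Brandstadt96}, which asserts that any finite collection $G^{i_1}, \dots, G^{i_k}$ of chordal powers admits a common perfect elimination ordering. Since $V$ is finite there are only finitely many distinct powers (the sequence stabilizes at the graph where all pairs are adjacent, i.e.\ at $k = \mathrm{diam}(G)$), so a single finite family captures all powers and the cited theorem yields one ordering $\pi$ that is a perfect elimination ordering of each $G^k$ simultaneously.

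To phrase the conclusion in the language of this paper, I would note that by the Lemma in Section~\ref{sec22} (a matrix has a perfect elimination ordering iff there is a common perfect elimination ordering of all its level graphs), the ordering $\pi$ is then a perfect elimination ordering of the distance matrix $-D_G$, and by construction it is a common perfect elimination ordering of all the powers of $G$. The only genuine obstacle in this argument is the appeal to~\cite{Brandstadt96}: everything hinges on the nontrivial combinatorial fact that chordality of all powers is not merely necessary but \emph{sufficient} for a common ordering to exist. The reduction via Duchet's theorem and the stabilization of the power sequence are routine; the weight of the theorem lies entirely in the cited result, so in a self-contained treatment this is the step I would expect to require real work.
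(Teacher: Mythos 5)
Your proposal is correct and takes essentially the same route as the paper: the paper also treats the theorem as an immediate consequence of Duchet's parity result~\cite{Duchet84} (if $G$ and $G^2$ are chordal then all powers are chordal) combined with the result of Brandst\"adt, Chepoi and Dragan~\cite{Brandstadt96} that any finite family of chordal powers $G^{i_1},\dots,G^{i_k}$ admits a common perfect elimination ordering, both invoked as black boxes. Your explicit remark that the power sequence stabilizes at $k=\mathrm{diam}(G)$, so a finite family suffices, is a small but legitimate tightening of the same argument, which the paper leaves implicit.
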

This theorem has an interesting implication in our context: the reverse direction of Lemma~\ref{prop:cycle} is true for the shortest path distance matrix $D_G$ of a graph.

\begin{corollary} \label{cor:power}
Let $G$ be an undirected graph and let $D_G$ be its shortest path  distance matrix.
Then the following assertions are equivalent.
\begin{itemize}
\item[(i)]  $-D_G$ has a perfect elimination ordering;
\item[(ii)] $-D_G$ has no weighted chordless cycle;
\item[(iii)] Every level graph of $-D_G$ is chordal;
\item[(iv)] $G$ and $G^2$ are chordal.
\end{itemize}
\end{corollary}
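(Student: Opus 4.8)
The plan is to establish the cycle of implications $\text{(iv)} \Rightarrow \text{(i)} \Rightarrow \text{(ii)} \Rightarrow \text{(iii)} \Rightarrow \text{(iv)}$, which will give equivalence of all four assertions. The implications $\text{(i)} \Rightarrow \text{(ii)} \Rightarrow \text{(iii)}$ are already furnished by Lemma~\ref{prop:cycle}, so no work is needed there. The real content lies in closing the loop with the two ``new'' implications connecting the powers of $G$ to the level structure of $-D_G$. I would begin by unpacking the dictionary between $-D_G$ and the powers of $G$: the distinct values of $-D_G$ are $0 > -1 > -2 > \cdots > -\operatorname{diam}(G)$, so the level graph of $-D_G$ at level $\alpha_\ell = -\ell$ consists exactly of the pairs $\{u,v\}$ with $-D_G(u,v) \ge -\ell$, i.e. $d_G(u,v) \le \ell$, which is precisely the edge set of the power $G^\ell$ (with $G^0$ the complete graph). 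Thus the level graphs of $-D_G$ are, up to the trivial complete graph, exactly the powers $G^1, G^2, \dots, G^{\operatorname{diam}(G)}$.

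With this identification in hand, the proof of $\text{(iii)} \Rightarrow \text{(iv)}$ is immediate: if every level graph of $-D_G$ is chordal, then in particular $G = G^1$ and $G^2$ are chordal. The crux of the argument is therefore $\text{(iv)} \Rightarrow \text{(i)}$. Here I would invoke Theorem~\ref{thm:power}: assuming $G$ and $G^2$ are chordal, that theorem guarantees that all the powers of $G$ admit a \emph{common} perfect elimination ordering $\pi$ of $V$. By the level-graph dictionary just established, $\pi$ is then a common perfect elimination ordering of all the level graphs of $-D_G$. The characterization in the lemma of Section~\ref{sec22} (``$A$ has a perfect elimination ordering iff there is a common perfect elimination ordering of all its level graphs'') then immediately yields that $-D_G$ has a perfect elimination ordering, establishing (i).

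The main obstacle is not in the logical skeleton, which is short, but in verifying the level-graph identification cleanly, since one must handle the degenerate level $G^0$ (the complete graph, always chordal and admitting any ordering as a perfect elimination ordering) and confirm that no values of $-D_G$ are skipped in a way that would break the monotone chain $G^1 \subseteq \cdots \subseteq G^{\operatorname{diam}(G)}$. One subtlety worth flagging is that Theorem~\ref{thm:power} is stated for \emph{all} powers of $G$, and we only need it for the finitely many powers up to $\operatorname{diam}(G)$, so there is no issue of infinitely many level graphs; the finitely many genuinely distinct powers are precisely the ones appearing as level graphs. Once the dictionary is stated precisely, everything else reduces to quoting the two cited results, so I expect the entire proof to be a few lines.
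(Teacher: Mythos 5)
Your proposal is correct and follows essentially the same route as the paper, whose one-line proof likewise combines Lemma~\ref{prop:cycle} for (i)$\Rightarrow$(ii)$\Rightarrow$(iii) with Theorem~\ref{thm:power} and the correspondence between the level graphs of $-D_G$ and the powers of $G$ (via the lemma of Section~\ref{sec22}) to close the cycle. The only blemish is the harmless slip in your dictionary at the degenerate levels: with the paper's convention the \emph{complete} graph arises at the lowest threshold $\alpha_0=-\operatorname{diam}(G)$ (i.e.\ as $G^{\operatorname{diam}(G)}$), while the level at value $0$ is edgeless, not $G^0$ complete --- both extremes are trivially chordal with every ordering a perfect elimination ordering, so nothing in the argument is affected.
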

\begin{proof}
This follows from Lemma~\ref{prop:cycle} and Theorem~\ref{thm:power} after observing the correspondence between  the adjacency matrices of the powers of $G$ and the level graphs of $-D_G$.
\qed
\end{proof}
This result does not extend to shortest path distance matrices of weighted graphs.
For this consider the matrix $A$ from Figure~\ref{fig:2} and   define the matrix $D=(3-A_{xy})_{x,y\in [6]}$.
Then $D$ is a shortest path distance matrix (for the weights $D_{xy}$), but  $-D$ has no perfect elimination ordering and   no weighted chordless cycle.

\subsubsection{Distance-preserving elimination orderings}
Here we point out a link between perfect elimination orderings and the following notion of distance-preserving ordering considered by Chepoi \cite{Chepoi98}.
For a graph $G=(V,E)$, a linear ordering $v_1,\dots, v_n$ of $V$ is called a {\em distance-preserving elimination ordering} if for each $i\in [n]$ 
the subgraph $G_i$ of $G$ induced by $\{v_i, \dots, v_n\}$ is {\em isometric}, 
i.e., $d_{G_i}$ coincides with the restriction of $d_G$ to $\{v_i, \dots, v_n\}$.
This notion can be naturally generalized to weighted graphs: 
Given  nonnegative edge weights $w\in \Re^E$ the shortest path metric of $(G,w)$ is denoted by $d_{(G,w)}$, 
and a linear ordering $v_1,\dots, v_n$ of $V$ is a {\em distance-preserving elimination ordering} of $(G,w)$ if for each $i\in [n]$ 
the weighted subgraph $(G_i, w)$ is {\em isometric}, i.e., 
$d_{(G_i,w)}$ coincides with the restriction of $d_{(G,w)}$ to $\{v_i,\cdots,v_n\}$.

We may identify the weighted graph $(G,w)$ with the symmetric matrix $W\in \mathcal S^V$ given by $W_{xy}=w_{xy}$ for $\{x,y\}\in E(G)$ and $W_{xy}=M$ for $\{x,y\} \notin E$ for some sufficiently large positive number $M$. 

\begin{proposition}
Let $(G,w)$ be a graph with nonnegative edge weights $w$ and corresponding weight matrix $W\in \mathcal S^V$.
Any perfect elimination ordering of  $-W$   is a distance-preserving elimination ordering of $(G,w)$.
\end{proposition}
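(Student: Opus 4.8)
The plan is to translate the three-points condition for $-W$ into a local ``shortcut'' property of shortest paths and then to peel off the vertices one at a time in the order $\pi$. First I would rewrite the defining inequality~(\ref{eqineq}) for the matrix $-W$: an ordering $\pi=(v_1,\dots,v_n)$ is a perfect elimination ordering of $-W$ exactly when
\[
W_{yz}\le \max\{W_{xy},W_{xz}\}\quad\text{whenever } x<_\pi y<_\pi z .
\]
Since $W_{xy}=w_{xy}$ on edges and $W_{xy}=M$ on non-edges, with $M$ chosen larger than every edge weight, this has a concrete meaning: if $x$ precedes both $y$ and $z$ in $\pi$ and $\{x,y\},\{x,z\}\in E$, then the right-hand side is at most $\max_e w_e<M$, which forces $\{y,z\}\in E$ with $w_{yz}\le\max\{w_{xy},w_{xz}\}$. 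This single structural fact drives the whole argument.

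Next I would reduce the statement to the claim that deleting the first vertex $v_1$ changes no distance among $\{v_2,\dots,v_n\}$, i.e.\ that $(G_2,w)$ is isometric in $(G,w)$. The restriction of $\pi$ to $V\setminus\{v_1\}$ is again a perfect elimination ordering, now of the principal submatrix $-W[V\setminus\{v_1\}]$, which is precisely the weight matrix of $(G_2,w)$. Hence, once this one step is established, induction on $n$ combined with the transitivity of \emph{being isometric} (if $(G_2,w)$ is isometric in $(G,w)$ and $(G_i,w)$ is isometric in $(G_2,w)$, then $(G_i,w)$ is isometric in $(G,w)$) yields the conclusion for every $G_i$.

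For the one step, I would take arbitrary $u,v\in\{v_2,\dots,v_n\}$ and a shortest \emph{simple} $u$--$v$ path $P$ in $(G,w)$; such a path exists because the weights are nonnegative, and on it $v_1$ occurs at most once, necessarily as an internal vertex since $u,v\ne v_1$. Writing the relevant subpath as $a,v_1,b$, the two incident edges give $\{a,v_1\},\{v_1,b\}\in E$, and $v_1<_\pi a$, $v_1<_\pi b$ because $v_1$ is $\pi$-first; the structural fact then yields $\{a,b\}\in E$ with
\[
w_{ab}\le\max\{w_{v_1a},w_{v_1b}\}\le w_{v_1a}+w_{v_1b},
\]
where nonnegativity is used in the last inequality. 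Replacing the detour $a,v_1,b$ by the single edge $\{a,b\}$ produces a $u$--$v$ walk avoiding $v_1$ of length at most that of $P$, hence $d_{(G_2,w)}(u,v)\le d_{(G,w)}(u,v)$; the reverse inequality is automatic because $G_2$ is a subgraph, so equality holds and $(G_2,w)$ is isometric.

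The only genuine obstacle is keeping the shortcut step honest: one must invoke nonnegativity of $w$ to be sure that replacing the length-$(w_{v_1a}+w_{v_1b})$ detour through $v_1$ by the edge $\{a,b\}$ never lengthens the path, and one must work with a simple shortest path so that $v_1$ appears exactly once and a single local replacement suffices. The remaining points---that the restricted order is a perfect elimination ordering of the submatrix, and that isometry composes along the elimination---are routine bookkeeping.
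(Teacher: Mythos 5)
Your proposal is correct and is essentially the paper's own argument: the key step in both is to apply the three-points inequality at the $\pi$-earliest vertex $v$ on a shortest path, giving $w_{ab}\le\max\{w_{va},w_{vb}\}\le w_{va}+w_{vb}$ and hence a shortcut edge $\{a,b\}$ bypassing $v$ without increasing length. The paper merely packages your induction on deleting the first vertex as a minimal-counterexample contradiction (taking the smallest $i$ for which $G_{i+1}$ fails to be isometric and a shortest path forced through $v_i$), but the substance is identical.
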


\begin{proof}
Let $\pi=(v_1,\dots, v_n)$ be a perfect elimination ordering of $-W$ and assume $\pi$ is not distance-preserving for $(G,w)$.
Let $i$ be the smallest integer such that $d_{(G_{i+1}, w)}$ is not equal to the restriction of $d_{(G,w)}$ to  $\{v_{i+1}, \dots, v_n\}$.
Then there exist $v_j, v_k$ with $i<j<k$ such that $d_{(G_{i+1},w)}(v_j,v_k)>d_{(G_i,w)}(v_j,v_k)$ and thus every shortest path $P$ between $v_j$ and $v_k$ in $(G_i,w)$ passes through $v_i$. Let $P$ be such a path, say
$P=(v_j,\cdots,v_{r},v_i,v_{s},\cdots,v_k)$.
As $\pi$ is a perfect elimination ordering of $-W$ we have $W_{v_{r}v_{s}}\le \max\{W_{v_iv_{r}},W_{v_iv_{s}}\}$ and thus
$$d_{(G_i,w)}(v_{r},v_{s}) \le W_{v_{r}v_{s}}\le \max\{W_{v_iv_{r}},W_{v_iv_{s}}\}\le W_{v_iv_r}+ W_{v_iv_s} =
d_{(G_i,w)}(v_r,v_s).$$
Hence equality holds throughout and thus the path $P\setminus \{v_i\}$ is again a shortest path from $v_j$ to $v_k$  in $(G_i,w)$ but now not traversing $v_i$, a contradiction.
 \qed
\end{proof}
The reverse direction is not true in general, even for $\{0,1\}$ matrices.

\subsubsection{Outlook about other structured matrices and recognition algorithms}


We conclude with some observations about possible further extensions of graph properties to matrices and about recognition algorithms.

We present in this paper a matrix analogue of chordal graphs, motivated by the fact that Robinsonian matrices give a matrix analogue of  unit interval graphs. The key point in both cases is that chordal  and unit interval graphs can be defined by a three-points condition on their adjacency matrix. We now mention two more graph classes that would also fit within this pattern: interval  and cocomparability graphs. 

Recall that a graph $G=(V,E)$ is an {\em interval graph} if and only if there is a linear ordering $\pi$ of $V$ such that 
$\{x,z\}\in E$ implies  $\{y,z\}\in E$ for all  $ x<_{\pi} y <_{\pi} z$~\cite{Olariu91}.
Hence one may define an {\em interval matrix} $A$ to be a matrix whose index set admits a linear ordering $\pi$ such that 
\begin{equation}\label{eqI}
A_{xz}\leq A_{yz}  \quad \text{for all }  x<_{\pi} y <_{\pi} z.
\end{equation}
Similarly  $G$ is a {\em cocomparability graph} if and only if there is a linear ordering $\pi$ of $V$ such that 
$\{x,z\}\in E$ implies  $\{x,y\}\in E$ or $\{y,z\}\in E$ for all  $x<_{\pi} y <_{\pi} z$ \cite{Kratsch93}.
Hence one may define a {\em cocomparability matrix} $A$ to be a matrix whose index set admits a linear ordering $\pi$ such that 
\begin{equation}\label{eqCC}
A_{xz}\leq \max\{A_{xy}, A_{yz}\}  \quad \text{for all } x<_{\pi} y <_{\pi} z.
\end{equation}
Clearly relation (\ref{eq:rob}) implies (\ref{eqI}), which in turn implies both (\ref{eqineq}) and (\ref{eqCC}). In other words,
this extends to matrices the well known fact that unit interval graphs are interval graphs, which in turn are chordal and cocomparability graphs.

As shown in \cite{Laurent17} the structural characterization of  unit interval graphs in terms of minimal forbidden structures extends naturally  to the matrix setting and in this paper (Theorem \ref{thmA}) we provide such an extension for chordal graphs.
Establishing such extensions for interval  and cocomparability matrices,  
or a more general theory for generalizing structural characterizations from  graphs to matrices, 
is an interesting open problem which we leave for further research.


\medskip
There are two well-known linear time algorithms for recognizing chordal graphs (and finding perfect elimination orderings):
{\em lexicographic breadth-first search} (Lex-BFS)~\cite{Rose76} and {\em maximum cardinality search} (MCS) \cite{Tarjan84}. 
A natural question is whether these algorithmic techniques can be extended to matrices.

Corneil \cite{Corneil04} gives an algorithm for recognizing unit interval graphs based on three sweeps of Lex-BFS. 
In \cite{Laurent16} a weighted generalization of Lex-BFS, called {\em Similarity First Search (SFS)}, is introduced, which applies to symmetric matrices. It is shown in \cite{Laurent16}
 that $n$ sweeps of SFS can recognize Robinsonian matrices of size $n$ by returning a Robinson ordering.
It is natural to ask whether  SFS can also be used to find  perfect elimination orderings.

In \cite{Brandstadt97} it is shown that Lex-BFS can find a common perfect elimination ordering of the powers of a chordal graph $G$ (assuming $G^2$ is chordal).
Hence, in view of Corollary~\ref{cor:power}, if $D$ is a shortest path distance matrix, 
then  a single sweep of SFS finds a perfect elimination ordering of  $-D$.
However it is not difficult to construct a symmetric matrix for which a single sweep of SFS does not suffice to find  a perfect elimination ordering.
This thus raises the question whether 
one can  find  perfect elimination orderings of matrices using multiple sweeps of SFS.

Finally a generalization of MCS is proposed in the proof of Theorem~\ref{thm:power} in \cite{Brandstadt96},
which  can be adapted to the matrix setting. But it is not difficult to find a symmetric matrix for which this generalized MCS cannot find  a perfect elimination ordering in just one sweep.
Again  one may ask  whether a multi-sweep type variant of MCS  can find a perfect elimination ordering.

Note also that it is shown recently in \cite{Dusart} that $n$ sweeps of Lex-BFS permit to find elimination orderings certifying cocomparability graphs.

\section{Proof of Theorem~\ref{thmA}}\label{secproof}
In this section we prove Theorem~\ref{thmA}.
By Lemma~\ref{prop:1}, if a matrix $A$ contains a self-contained pair of weighted chordless walks then it has no perfect elimination ordering, hence
it remains to show the converse implication.
A first easy observation is that it in fact suffices to show the existence of a simplicial vertex. Indeed,  Theorem \ref{thmA} follows easily from the next result (using induction on the size of the matrix).

\begin{theorem}\label{thmB}
If a matrix $A$ has no self-contained pair of weighted chordless walks then $A$ has a simplicial vertex.
\end{theorem}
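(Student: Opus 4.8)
The plan is to argue by contradiction: I assume that $A$ has no simplicial vertex and construct a self-contained \emph{pair} of weighted chordless walks. By Lemma~\ref{lem:simplicial}, the absence of simplicial vertices already gives a self-contained \emph{family} $\{P_x: x\in V\}$ of length-two weighted chordless walks, where $P_x=(y_x,x,z_x)$ witnesses that $x$ is not simplicial. The entire difficulty is to compress this family of $n$ short walks down to just \emph{two} walks while preserving both the weighted-chordless property \eqref{eqwalk} and the self-containment condition $V(W_1)\cup V(W_2)=I(W_1)\cup I(W_2)$. The natural strategy is to concatenate the $P_x$ greedily: starting from some vertex, follow the "pointer" $x\mapsto z_x$ (or $x\mapsto y_x$) to chain the length-two walks into longer walks, so that each vertex traversed becomes an internal vertex of the resulting walk, which is exactly what self-containment demands.

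The key technical step I would isolate first is a concatenation lemma: if $W=(\dots,u,x)$ is a weighted chordless walk and $(x,w)$ is such that $A_{xw}<\min\{A_{ux},A_{?}\}$ in the right configuration, then one can legitimately extend $W$ by appending a continuation through $x$. Concretely, to append the witness $P_x=(y_x,x,z_x)$ onto a walk ending at $y_x$ and continuing to $z_x$, the new internal triple centered at $x$ is controlled precisely by \eqref{eqwalk} for $P_x$, while the triple centered at $y_x$ (the old endpoint) must be checked against the incoming edge. I expect that the honest bookkeeping here — ensuring that at \emph{every} junction where two witnesses are glued, the two-chord inequality \eqref{eqwalk} holds — is the real content, and it is why the authors flag that "the technical details are more involved." The main obstacle is that a naive concatenation can fail the chordless condition exactly at the gluing points, so I would likely need to choose the witnesses $P_x$ carefully (e.g.\ picking $z_x$ to be the element minimizing some entry, to force the needed strict inequalities) rather than taking them arbitrarily.

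To get down to \emph{two} walks rather than one long walk, I would exploit the structure of the functional graph induced by a pointer map $f:x\mapsto z_x$ on $V$: any such map, iterated, eventually enters a cycle, and the vertices decompose into "rho-shaped" trajectories feeding into cycles. Each closed trajectory already yields one self-contained closed walk (every vertex on a cycle is internal). The subtlety is covering the vertices on the tails (those not on any cycle), which is presumably where a \emph{second} walk is needed: one walk handles the cyclic core, a second walk sweeps up the remaining vertices so that together their internal-vertex sets exhaust $V(W_1)\cup V(W_2)$. I would set up two complementary pointer maps ($x\mapsto y_x$ and $x\mapsto z_x$) to guarantee every vertex appears as an internal vertex of at least one of the two assembled walks.

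Finally, I would verify the self-containment condition for the pair directly from the construction: by design each $v\in V$ is the center of some appended witness $P_v$ and hence internal to whichever of $W_1,W_2$ absorbed $P_v$, giving $V(W_1)\cup V(W_2)\subseteq I(W_1)\cup I(W_2)$ (the reverse inclusion is automatic). The weighted-chordless property of each $W_h$ reduces to checking \eqref{eqwalk} at internal triples, which splits into triples \emph{inside} a single witness (immediate) and triples \emph{straddling a junction} (the concatenation lemma). I anticipate that forcing the junction inequalities is the crux, and that an extremal or minimality argument in the choice of witnesses will be the cleanest way to secure them.
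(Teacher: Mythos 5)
There is a genuine gap, and it sits exactly where you flag ``the crux'': nothing in your plan forces the inequality \eqref{eqwalk} at the gluing points, and a local or extremal choice of witnesses cannot. To append the witness $P_v=(y_v,v,z_v)$ when your walk arrives at $v$, the incoming vertex must be $y_v$; that is, the legal continuations are governed by the ordered pair (previous vertex, current vertex), not by the current vertex alone. So your pointer maps $x\mapsto z_x$ on $V$ are the wrong objects: the relevant functional structure lives on ordered pairs, where a pair can be a dead end even though every vertex is the midpoint of some witness. The paper's Figure~\ref{fig:1}(a) defeats the chaining heuristic concretely: there every vertex is the center of a weighted chordless $2$-walk, yet these walks cannot be concatenated into any weighted chordless cycle, and the self-contained walk that does exist, $(1,4,5,3,1,2,5)$, revisits vertices and is not of the rho-shaped form your functional-graph decomposition produces. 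Two further slips: a closed trajectory does not ``already yield'' a self-contained closed walk, since the endpoint of a closed walk is not internal and closing it up chordlessly requires the extra wrap-around inequality \eqref{eqcycle}, which nothing in your construction supplies; and Figure~\ref{fig:2} shows that the eventual pair, $W_1=(6,2,1,3,6)$ and $W_2=(1,4,6,5,1)$, interlocks through each other's interiors ($6\in I(W_2)$, $1\in I(W_1)$) rather than decomposing into a cyclic core plus a tail-sweeping walk.

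The paper's proof is organized around precisely the mechanism your sketch is missing. It strengthens the claim to Theorem~\ref{thmC} --- any matrix with no self-contained pair has a critical walk (Definition~\ref{defcritical}) or two distinct simplicial vertices $u,v$ with $A_{uv}=\min A$ --- and proves this by induction on the size of $A$, using separations $(X,Y)$ (Definition~\ref{defseparation}) in which every entry across the separation equals $\min A$ (Lemmas~\ref{lempath} and~\ref{lemP12}). Walks are concatenated \emph{only} across such separations, so every junction chord automatically has value $\min A$, strictly below the adjacent entries along the walks (these exceed $\min A$ by chordlessness of the pieces, or by property (P) for length-one pieces), and \eqref{eqwalk} at the junctions holds for free; the self-contained pair of walks appears only in the final case of the induction, as the contradiction. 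Any repair of your direct contrapositive construction would have to import some such global device guaranteeing minimal junction chords; an extremal selection among the local witnesses $P_x$ will not provide it.
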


We will in fact prove a stronger result (Theorem \ref{thmC} below). 
Before stating this stronger result we introduce some notation and preliminary facts.
Throughout we let $A$ be a symmetric matrix indexed by a finite set $V$.

\begin{definition}\label{defseparation}
Set $\min A=\min\{A_{xy}: x\ne y \in V\}$. Given $X,Y\subseteq V$ we say that $(X,Y)$ is a {\em separation} of $A$ if $X\setminus Y, Y\setminus X\ne \emptyset$ and $A_{xy}=\min A$ for all $x\in X\setminus Y$ and $y\in Y\setminus X$.
\end{definition}

\begin{lemma}\label{lemsep}
Let $(X,Y)$ be a separation of $A$. If $x\in X\setminus Y$ is a simplicial vertex of $A[X]$ then $x$ is a simplicial vertex of $A$,
where $A[X]$ denotes the principal submatrix of $A$ indexed by $X$.
\end{lemma}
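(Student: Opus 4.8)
The plan is to verify the simplicial condition \eqref{eqsimplicial} for $x$ as a vertex of $A$, using the hypothesis that it holds within $A[X]$. So I must show $A_{yz}\ge \min\{A_{xy},A_{xz}\}$ for all distinct $y,z\in V\setminus\{x\}$. The natural strategy is a case analysis according to how $y$ and $z$ sit relative to the separation $(X,Y)$. Since $x\in X\setminus Y$, I would split the pair $\{y,z\}$ into three cases: (a) both $y,z\in X$; (b) exactly one of them lies in $X$; and (c) neither lies in $X$ (so both are in $V\setminus X\subseteq Y\setminus X$, using that $X\cup Y=V$ may or may not hold, so I should be careful and treat $V\setminus X$ directly).

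In case (a), both $y$ and $z$ belong to $X$, so the desired inequality is exactly the statement that $x$ is simplicial in $A[X]$, which holds by hypothesis. This is the easy case. The remaining cases are where the separation property does the work. First I would record the basic fact that for any $u\in V\setminus X$ we have $u\in Y\setminus X$ (since $u\notin X$ forces $u\in Y$ only if $X\cup Y=V$; I expect the intended reading is that $V=X\cup Y$, or more robustly that any element outside $X$ can be paired with $x\in X\setminus Y$ to give value $\min A$). The key consequence I want is: for every $u\in V\setminus X$, $A_{xu}=\min A$, because $x\in X\setminus Y$ and $u$ lies in $Y\setminus X$, so Definition~\ref{defseparation} gives $A_{xu}=\min A$.

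With this in hand, case (c) is immediate: if both $y,z\notin X$, then $A_{xy}=A_{xz}=\min A$, so $\min\{A_{xy},A_{xz}\}=\min A\le A_{yz}$ since $\min A$ is the global minimum of the off-diagonal entries. For case (b), say $y\in X$ and $z\notin X$ (the symmetric subcase is identical), we have $A_{xz}=\min A$ by the separation property, hence $\min\{A_{xy},A_{xz}\}=\min A\le A_{yz}$ again because $\min A$ is a lower bound on all off-diagonal entries. Thus in both (b) and (c) the inequality holds trivially, and the only substantive input is the hypothesis on $A[X]$, used in case (a).

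The main (minor) obstacle is purely bookkeeping: making sure every $u\notin X$ genuinely satisfies $A_{xu}=\min A$, which requires that $u$ be an element of $Y\setminus X$ so that the separation condition applies to the pair $(x,u)$. I expect this to follow directly once one notes $x\in X\setminus Y$ and uses that the relevant elements lie outside $X$; the global minimality of $\min A$ then closes every case not covered by the simpliciality of $x$ in $A[X]$. No deep idea is needed beyond the observation that a separation forces $x$ to attain the global minimum against everything outside $X$, so simpliciality inside $X$ automatically lifts to simpliciality in all of $A$.
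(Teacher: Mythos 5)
Your proof is correct and takes essentially the same route as the paper's: both arguments observe that if at least one of $y,z$ lies outside $X$ (hence in $Y\setminus X$), then $A_{xu}=\min A$ for that element, so $\min\{A_{xy},A_{xz}\}=\min A\le A_{yz}$ trivially, while the case $y,z\in X$ is exactly the hypothesis that $x$ is simplicial in $A[X]$. Your bookkeeping worry about $V=X\cup Y$ is well spotted but harmless: Definition~\ref{defseparation} does not state it explicitly, yet every separation constructed in Lemma~\ref{lempath} has $X\cup Y=V$, and the paper's own proof of Lemma~\ref{lemsep} tacitly uses this in precisely the same way you do.
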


\begin{proof}
Let $u,v\in V$, we show that $A_{uv} \ge \min\{A_{ux},A_{vx}\}$. This is true when (say) $u\in Y\setminus X$ since then $A_{xu}=\min A$, and also when $u,v\in X$ because $x$ is simplicial in $A[X]$.\qed
\end{proof}

\begin{lemma}\label{lempath}
There exists a separation $(X,Y)$ of $A$ for which the following property holds for each $Z\in \{X,Y\}$:
\begin{equation}\tag{P}
\begin{array}{l}
\text{For all } u\in Z\setminus (X\cap Y) \text{ and } s\in X\cap Y, \text{ either  } A_{su}>  \min A  \text{ holds}, \\
\text{or there exists a weighted chordless walk from } u \text{ to } s \text{ in } A[Z] \\
\text{which is  internally vertex-disjoint from } X\cap Y.
\end{array}
\end{equation}
In addition, given $a,b\in V$ with $A_{ab}=\min A$, there exists a separation $(X,Y)$ of $A$ separating $a,b$ (i.e., 
$a\in X\setminus Y$, $b\in Y\setminus X$ or vice versa) and (P) holds for $u\in \{a,b\}$.
\end{lemma}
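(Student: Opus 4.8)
The plan is to recast the entire statement in terms of the threshold graph at the lowest level of $A$. Writing $m=\min A$, let $H$ be the graph on $V$ whose edges are the pairs $\{x,y\}$ with $A_{xy}>m$; thus the non-edges of $H$ are exactly the pairs with $A_{xy}=m$. With this dictionary, a separation $(X,Y)$ of $A$ is precisely a pair with $X\setminus Y\neq\emptyset\neq Y\setminus X$ and no $H$-edge joining $X\setminus Y$ to $Y\setminus X$. The observation linking this picture to weighted chordless walks is that every induced path of $H$ is a weighted chordless walk of $A$: along an induced path, consecutive vertices are $H$-adjacent (so the corresponding entries exceed $m$) while vertices two apart are $H$-non-adjacent (so the corresponding entry equals $m$), which is exactly inequality (\ref{eqwalk}).

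With this in hand, I would build the separation from a minimal vertex separator. Fix $a,b\in V$ with $A_{ab}=m$, so that $a,b$ are non-adjacent in $H$, and let $S$ be a minimal $a$--$b$ separator of $H$ (possibly empty, if $a,b$ already lie in different components). Let $C_a$ and $C_b$ be the connected components of $H-S$ containing $a$ and $b$, and set $X=C_a\cup S$ and $Y=C_b\cup S$, so that $X\cap Y=S$, $X\setminus Y=C_a$ and $Y\setminus X=C_b$. Since $C_a$ and $C_b$ are distinct components of $H-S$, there is no $H$-edge between them, i.e. $A_{xy}=m$ for all $x\in C_a$ and $y\in C_b$; hence $(X,Y)$ is a separation separating $a$ and $b$. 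For the first, unconditional, assertion one simply takes any pair $a,b$ attaining $m$.

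It then remains to verify property (P), and by symmetry it suffices to treat $Z=X$. Let $u\in C_a$ and $s\in S$. If $A_{su}>m$ the first alternative of (P) holds, so assume $A_{su}=m$. Here I would invoke the standard fact that every vertex of a minimal $a$--$b$ separator has a neighbor in each of $C_a$ and $C_b$ (otherwise $S$ minus that vertex would still separate $a$ from $b$); thus $s$ has a neighbor in $C_a$, and since $C_a$ is connected, $u$ and $s$ lie in the same component of $H[C_a\cup\{s\}]$. Taking a shortest $u$--$s$ path in $H[C_a\cup\{s\}]$ yields an induced path of $H$ (shortest paths are induced), all of whose internal vertices lie in $C_a$ and hence avoid $S=X\cap Y$; by the observation above this path is a weighted chordless walk from $u$ to $s$ in $A[X]$ internally disjoint from $X\cap Y$, as required. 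The same argument with $C_b$ in place of $C_a$ handles $Z=Y$, and specializing $u$ to $a$ or $b$ gives the ``in addition'' clause.

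The main obstacle is choosing the separation correctly. The naive choice $Y=V\setminus C_a$ fails property (P): vertices of $S$ need not have a neighbor in components of $H-S$ other than $C_a$ and $C_b$, so no weighted chordless walk into such a component would be guaranteed. Restricting instead to $X=C_a\cup S$ and $Y=C_b\cup S$ --- which is legitimate because Definition~\ref{defseparation} does not require $X\cup Y=V$ --- together with the minimality of $S$ is exactly what makes every separator vertex reachable by an induced $H$-path on each of the two sides.
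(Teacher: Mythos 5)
Your construction is essentially the paper's: the same threshold graph $H$ at level $\min A$, minimal vertex separators, and shortest (hence induced) paths into the separator, and your local verifications are all sound (the empty-separator case, induced paths satisfying (\ref{eqwalk}), length at least two when $A_{su}=\min A$). The one substantive divergence is the step you flag as your main idea: setting $Y=C_b\cup S$ and discarding the other components of $H-S$, on the grounds that Definition~\ref{defseparation} does not literally require $X\cup Y=V$. That literal reading is defensible, but the covering condition $X\cup Y=V$ is tacitly assumed wherever the lemma is consumed: the proof of Lemma~\ref{lemsep} splits into the cases ``$u\in Y\setminus X$'' and ``$u,v\in X$,'' a case analysis that is exhaustive only when $X\cup Y=V$, and Lemma~\ref{lemsep} is genuinely false for non-covering separations (for $u\notin X\cup Y$ nothing constrains $A_{xu}$, so one can make $A_{uv}<\min\{A_{xu},A_{xv}\}$ with $v\in X$). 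Since Lemma~\ref{lemP12} and Theorem~\ref{thmC} apply Lemma~\ref{lemsep} precisely to the separation supplied by Lemma~\ref{lempath} (e.g.\ in the case $X\cap Y=\emptyset$ of Theorem~\ref{thmC}, and in the step ``$x\notin D$ and thus $x$ is simplicial in $A[Z_{i-1}]$''), a proof that delivers a non-covering separation establishes a statement too weak for its intended use; under the intended reading, your argument for the first, unconditional clause has a gap.

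The missing idea is the distinction between inclusion-minimal separators and minimal $(a,b)$-separators. Your stated obstacle to the covering choice $Y=V\setminus C_a$ --- that separator vertices need not have neighbors in components other than $C_a,C_b$ --- is real for a minimal $(a,b)$-separator, but it vanishes if, for the first clause, you instead take $S$ inclusion-minimal among \emph{all} separators of $H$: if some $s\in S$ had no neighbor in some component $C$ of $H-S$, then in $H-(S\setminus\{s\})$ the vertex $s$ merges only with the components it does see while $C$ stays separated, so $S\setminus\{s\}$ would still be a separator, contradicting minimality. Hence every $s\in S$ has a neighbor in \emph{every} component, and $X=C_1\cup S$, $Y=V\setminus C_1$ satisfies (P) in full on both sides (the paper's wording ``$X=C_1$'' should be read as $X=C_1\cup S$, since otherwise the separation condition already fails on the edges between $C_1$ and $S$). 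This also explains why the ``in addition'' clause is deliberately stated only for $u\in\{a,b\}$: there one is forced to use a minimal $(a,b)$-separator to split the prescribed pair, and then, exactly as you observed, full (P) on the $Y$-side is unavailable --- the paper accepts the weakened (P) while keeping $Y=V\setminus C_a$, rather than shrinking $Y$. With that one repair (inclusion-minimal $S$ and covering $Y$ for the first clause; your $(a,b)$-separator argument with $Y=V\setminus C_a$ and (P) restricted to $u\in\{a,b\}$ for the second) your proof coincides with the paper's.
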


\begin{proof}
Let $G=(V,E)$ be the graph on $V$ whose edges are the pairs $\{x,y\}$ with $A_{xy}>\min A$.
If $G$ is not connected and $C_1,\cdots, C_t$ ($t\ge 2$) denote its  connected components then we may set 
$X=C_1$ and $Y=V\setminus C_1$. 

Assume now that $G$ is connected. Let $S$ be a minimal vertex separator of $G$ and let $C_1,\cdots,C_t$ be the connected components of $G[V\setminus S]$. Fix $s\in S$. As $S\setminus \{s\}$ is a not a vertex separator of $G$  it follows that $s$ is adjacent to at least one vertex in each component $C_i$. Hence for any $x\in C_i$ there is a path from $x$ to $s$ in $G[C_i\cup\{s\}]$ and if we choose this path shortest possible then either it consists of a single edge or it provides a weighted chordless walk from $x$ to $s$ in $A$ which is contained in $C_i\cup\{s\}$ and thus internally vertex-disjoint from $S$. Thus the lemma holds if we set, e.g., $X=C_1$ and $Y=V\setminus C_1$.

Finally  if we are given a pair $a,b$ with $A_{ab}=\min A$ then choosing $S$ to be a minimal $(a,b)-$vertex separator in $G$ and   $C_1$   the component containing $a$ gives the final statement. \qed
\end{proof}

\begin{definition}\label{defcritical}
A walk $W$ is said to be a {\em critical walk} of $A$ if 
$W$ is a closed weighted chordless walk whose end point $v_0$ is simplicial in $A$ and 
there exists an internal element $u\in I(W)$ such that $A_{v_0u}=\min A$.

A walk $W$ is said to be {\em rooted in a set $S\subset V$} if  the end points of $W$ belong to $S$ and the internal elements of $W$ belong to $V\setminus S$ with $I(W)\neq \emptyset$.
\end{definition}

We can now formulate the following stronger result.

\begin{theorem}\label{thmC}
If a matrix $A$ has no self-contained pair of weighted chordless walks then it satisfies at least one of the following two properties (A) or (B):
\begin{equation}\tag{A}
A \text{  has a critical walk,}
\end{equation}
\begin{equation}\tag{B}
A \text{ has two distinct simplicial vertices } u,v \text{ such that } A_{uv}=\min A.
\end{equation}
\end{theorem}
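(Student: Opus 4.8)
The plan is to prove Theorem~\ref{thmC} by induction on $n=|V|$, using the separation machinery of Lemmas~\ref{lemsep} and~\ref{lempath} as the engine. The base case is immediate: for $n=2$ both elements are simplicial (condition~\eqref{eqsimplicial} is vacuous) and their unique off-diagonal entry equals $\min A$, so (B) holds. For the inductive step I would invoke Lemma~\ref{lempath} to fix a separation $(X,Y)$ of $A$ satisfying property (P), and write $S=X\cap Y$. The first observation is that the hypothesis is inherited by the two sides: any weighted chordless walk of a principal submatrix $A[X]$ or $A[Y]$ is also a weighted chordless walk of $A$ (the inequalities in~\eqref{eqwalk} involve only the relevant entries), so a self-contained pair living on a side would be one for $A$. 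Hence $A[X]$ and $A[Y]$ have no self-contained pair and, being strictly smaller, each satisfies (A) or (B) by the induction hypothesis.

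The clean case is to turn these outcomes into one for $A$. By Lemma~\ref{lemsep}, a simplicial vertex of $A[X]$ lying in $X\setminus Y$ is already simplicial in $A$, and symmetrically for $Y$. Moreover each of (A) and (B) hands us a simplicial vertex of the relevant submatrix: the endpoint $v_0$ of the critical walk in case (A), and either of the two vertices in case (B). Thus, if each side supplies such a simplicial vertex inside its own component $Z\setminus S$, then we obtain simplicial vertices $u\in X\setminus Y$ and $v\in Y\setminus X$ of $A$, and the separation property forces $A_{uv}=\min A$, so (B) holds for $A$. The entire difficulty is therefore concentrated in the case where, for at least one side, every simplicial vertex produced by the inductive call is \emph{trapped} in the separator $S$.

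I expect this trapped case to be the main obstacle, and property (P) to be the decisive tool for it. A vertex $s\in S$ that is simplicial in $A[X]$ can fail to be simplicial in $A$ only by being the midpoint of a weighted chordless walk $(y,s,z)$ with $z\in Y\setminus X$, and (P) is exactly what lets me route weighted chordless walks from separator vertices out into a component while keeping them internally disjoint from $S$. The idea is to splice such crossing $(y,s,z)$-witnesses with the (P)-walks and with the inductively found closed walks, so as either to close up a weighted chordless closed walk through a genuinely simplicial vertex of $A$ together with an internal element at value $\min A$ from it --- that is, a critical walk in the sense of Definition~\ref{defcritical}, giving (A) --- or to promote a trapped vertex to a true simplicial vertex of $A$ admitting a partner across $S$, giving (B).

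The real technical crux, and where I expect the bookkeeping to be heaviest, is twofold. First, one must verify that every spliced walk stays weighted chordless at each internal vertex, including at the junctions where pieces from the two sides and from (P) are concatenated and at the separator crossings; this is precisely where the exact form of (P) and the minimality of the separator in Lemma~\ref{lempath} are used. Second, when no simplicial vertex and no critical walk can be extracted, one must assemble the many midpoint-walks into exactly \emph{two} weighted chordless walks whose internal vertices together cover all vertices involved, i.e.\ a self-contained pair, contradicting the hypothesis through Lemma~\ref{prop:1}. Driving the count down from the self-contained \emph{family} of Lemma~\ref{lem:simplicial} to a self-contained \emph{pair} is the genuinely Dirac-type step, and the internal-disjointness from $S$ guaranteed by (P), together with the shared separator endpoints, is what permits the merging into just two walks.
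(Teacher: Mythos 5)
Your outer skeleton matches the paper's proof exactly: induction on $|V|$, a separation $(X,Y)$ with property (P) from Lemma~\ref{lempath}, the observation that a self-contained pair in a principal submatrix is one in $A$, the clean case where both sides yield simplicial vertices $u\in X\setminus Y$, $v\in Y\setminus X$ giving (B) via Lemma~\ref{lemsep}, a mixed case assembled into a critical walk giving (A), and a final case contradicting the hypothesis through a self-contained \emph{pair}. But the decisive step is missing, and you have correctly located it yourself: the ``trapped'' case. The paper does not resolve it by splicing midpoint witnesses $(y,s,z)$; it proves a separate dichotomy (Lemma~\ref{lemP12}): under the inductive hypothesis that \emph{every} proper principal submatrix satisfies (A) or (B), each side $Z\in\{X,Y\}$ satisfies either (P1) ($A[Z]$ has a simplicial vertex in $Z\setminus(X\cap Y)$) or (P2) ($A[Z]$ has a weighted chordless walk rooted in $X\cap Y$). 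Note that this requires the induction hypothesis for \emph{all} proper submatrices, not merely for $A[X]$ and $A[Y]$ as in your plan: the proof of Lemma~\ref{lemP12} builds a strictly decreasing chain $X=Z_0\supset Z_1\supset\cdots\supset Z_k$ with invariants (Q1)--(Q2), repeatedly applying (A)-or-(B) to $A[Z_{i-1}]$, and when the simplicial vertices or the critical-walk witnesses are trapped in $Y$, it takes a \emph{new} separation $(C,D)$ of $A[Z_{i-1}]$ (with respect to $\min A[Z_{i-1}]$, which may exceed $\min A$ --- a subtlety your sketch does not address) separating the two trapped vertices, and shrinks to $Z_i=C$. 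The invariant (Q2), which propagates simpliciality from $A[Z_i]$ back up to $A[X]$ via Lemma~\ref{lemsep}, is what eventually converts the trapped outcome into (P1) or (P2).

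Concretely, here is where your splicing plan fails as stated: if the inductive call on $A[X]$ returns outcome (A), you get a critical walk of $A[X]$ whose endpoint $v_0$ may lie in $S$ and whose min-entry internal witness $u$ may also lie on the $Y$-side of a further separation; neither a simplicial vertex of $A$ nor a rooted walk falls out directly, and there is no single $(y,s,z)$ witness to splice. The paper handles exactly this configuration by extracting a rooted subwalk when $u\notin Y$ (giving (P2)) and by descending to a smaller submatrix when $u\in Y$ --- an iterated re-application of induction that has no counterpart in your proposal. Once Lemma~\ref{lemP12} is available, your three terminal cases are essentially the paper's Cases 1--3 (including the construction of the self-contained pair $W,W'$ from two rooted walks $P,Q$ in Case 3, with chordlessness at the junctions verified from the fact that entries across the separation equal $\min A$), so the gap is confined to, but genuinely concentrated in, the dichotomy lemma and its inner descent.
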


Property (B) is a matrix analogue of a known fact that the diameter of a chordal graph is attained by a pair of simplicial vertices (see, e.g., ~\cite{Farber86}).
This property is no longer true for symmetric matrices, see Figure~\ref{fig:3}.
A weaker well known fact by Dirac~\cite{Dirac61} is  that a chordal graph has at least two simplicial vertices that are not adjacent if it is not a complete graph. 
The example in Figure~\ref{fig:3} shows that even this weaker fact fails for general matrices.

\begin{figure}[t]
\centering
\begin{tikzpicture}{scale=0.85}

\def \n {4}
\def \radius {1cm}
\foreach \s in {1,...,\n}
{
  \node[draw, circle, inner sep=1pt,minimum size=1.5pt] (\s) at ({360/\n * (\s - 1)+90}:\radius) {$\s$};
}
\foreach  \x/\y in {1/2,1/3}
{
\draw[thick] (\x) -- (\y);
}
\foreach \x/\y in {2/3,2/4,3/4}
{
\draw[thick,dashed] (\x) -- (\y);
}
\end{tikzpicture}
\caption{A visualization of a $\{0,1,2\}$-matrix of size four as a weighted graph, 
where each bold edge has weight two and each dashed edge has weight one.
Observe that $4$ is the unique simplicial vertex in $A$.}
\label{fig:3}
\end{figure}
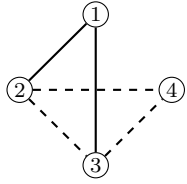

Clearly both properties (A) and (B) imply the existence of a simplicial vertex, hence Theorem \ref{thmC} does indeed imply Theorem \ref{thmB} (and thus in turn Theorem \ref{thmA}). The following lemma will provide  the main technical ingredient for the proof of Theorem~\ref{thmC}.

\begin{lemma}\label{lemP12}
Let $(X,Y)$ be a separation of $A$ satisfying the property (P) from Lemma~\ref{lempath}. 
Assume 
that  every proper (i.e., distinct from $A$) principal submatrix of $A$ satisfies (A) or (B). 
Then each  $Z\in \{X,Y\}$ satisfies at least one of the following two properties (P1), (P2):
\begin{equation}\tag{P1}
A[Z] \text{ has a simplicial vertex belonging to } Z\setminus (X\cap Y),
\end{equation}
\begin{equation}\tag{P2}
\begin{array}{l}
\text{there is a weighted chordless walk in } A[Z] \text{ which is rooted in } X\cap Y. 
\end{array}
\end{equation}
\end{lemma}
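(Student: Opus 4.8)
The plan is to fix $Z \in \{X, Y\}$ and argue by contradiction: suppose $A[Z]$ satisfies neither (P1) nor (P2). I want to extract from this failure a self-contained pair of weighted chordless walks in the full matrix $A$, contradicting the hypothesis. The key asymmetry to exploit is that failure of (P1) is a statement about every vertex of $Z \setminus (X\cap Y)$ being non-simplicial inside $A[Z]$, while (P2) failing says no weighted chordless walk in $A[Z]$ is rooted in $X\cap Y$. The natural first move is to apply the hypothesis that every \emph{proper} principal submatrix satisfies (A) or (B); the obvious candidate to feed in is $A[Z]$ itself (which is proper since $X\setminus Y, Y\setminus X \neq \emptyset$ force $Z \neq V$). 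So $A[Z]$ has either a critical walk (property (A)) or two simplicial vertices at minimum distance (property (B)).

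Next I would analyze these two cases in light of the failure of (P1). If $A[Z]$ satisfies (B), it has two simplicial vertices $u,v$ with $A_{uv} = \min A[Z]$. Since (P1) fails, \emph{both} $u,v$ must lie in $X\cap Y$ (any simplicial vertex of $A[Z]$ landing in $Z\setminus(X\cap Y)$ would give (P1) directly). This is where property (P) becomes the engine: for a vertex $u$ of $Z\setminus(X\cap Y)$, (P) guarantees that either $A_{su}>\min A$ for the relevant $s\in X\cap Y$, or there is a weighted chordless walk from $u$ to $s$ in $A[Z]$ internally disjoint from $X\cap Y$. I expect to combine these (P)-walks, which reach into the interior $Z\setminus(X\cap Y)$, with a walk supplied by the simplicial/critical structure to form a walk \emph{rooted} in $X\cap Y$ — thereby producing (P2) and the contradiction. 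In the (A) case the critical walk of $A[Z]$ is already a closed weighted chordless walk with a simplicial end point; the task is to splice it with (P)-walks so that all end points are pushed into $X\cap Y$ and all internal vertices stay in $Z$, yielding a walk rooted in $X\cap Y$.

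The main obstacle I anticipate is the \emph{concatenation bookkeeping}: when I glue a (P)-walk from an interior vertex $u$ to a separator vertex $s$ onto another walk, I must verify that the weighted-chordless inequality~\eqref{eqwalk} survives at the \emph{junction} vertices. The inequality is a local three-point condition $A_{v_{i-1}v_{i+1}} < \min\{A_{v_{i-1}v_i}, A_{v_{i+1}v_i}\}$, so each splice requires controlling the second neighbors across the seam. The separation structure should help here: because $A_{xy} = \min A$ for $x \in X\setminus Y$, $y \in Y\setminus X$, and the (P)-walks are internally disjoint from $X\cap Y$, the values at the seams are pinned at $\min A$, which is exactly the extremal value needed to certify the strict inequalities in~\eqref{eqwalk} and~\eqref{eqcycle}. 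I would set up the junctions so that the separator vertex $s$ plays the role of a strict local minimum of the surrounding entries, letting the chordless condition propagate through the gluing point.

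The secondary subtlety is ensuring the resulting object is genuinely a \emph{walk rooted in $X\cap Y$} in the sense of Definition~\ref{defcritical}, i.e.\ its two end points lie in $X\cap Y$, its internal set is nonempty, and its internal vertices avoid $X\cap Y$. Closing up the critical walk (case (A)) may produce end points already in the separator, or may require one more (P)-walk to redirect a stray end point back into $X\cap Y$; I would handle this by a short case split on whether the distinguished simplicial end point $v_0$ of the critical walk lies in $X\cap Y$ or in the interior, invoking (P) in the latter subcase to route it to the separator. Once the rooted walk is assembled, (P2) holds for $A[Z]$, contradicting our assumption, which completes the argument for that choice of $Z$; the same reasoning applies symmetrically to the other member of $\{X,Y\}$.
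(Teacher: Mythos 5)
Your framing already misfires: Lemma~\ref{lemP12} does \emph{not} assume that $A$ has no self-contained pair of weighted chordless walks --- its only hypotheses are the separation with property (P) and that every proper principal submatrix satisfies (A) or (B) --- so your plan to ``extract a self-contained pair in the full matrix $A$, contradicting the hypothesis'' aims at a contradiction the lemma does not license (that assumption enters only later, in Case 3 of the proof of Theorem~\ref{thmC}). More importantly, there is a concrete case that a single application of (A)/(B) to $A[Z]$ cannot resolve, and it is exactly where your sketch goes vague. Suppose $A[Z]$ satisfies (B); as you correctly note, failure of (P1) forces both simplicial vertices $u,v$ into $X\cap Y$, with $A_{uv}=\min A[Z]$. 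At this point your ``engine'' stalls: property (P) only supplies walks from vertices of $Z\setminus(X\cap Y)$ to separator vertices, so it says nothing about a pair lying entirely inside $X\cap Y$, and no walk rooted in $X\cap Y$ (nor any off-separator simplicial vertex) can be read off from this configuration. The analogous dead end occurs in case (A) when the critical walk's simplicial end point $v_0$ and its minimum-partner $u$ both lie in $X\cap Y$.

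The paper's proof resolves precisely this residual case by a recursive descent that your proposal lacks: from two vertices $v,w\in Z\cap Y$ with $A_{vw}=\min A[Z]$ (the claim (\ref{eqCD})), it invokes the second part of Lemma~\ref{lempath} \emph{inside} $A[Z]$ to obtain a fresh separation $(C,D)$ separating $v$ from $w$, replaces $Z$ by the strictly smaller side $C$, and iterates until the intersection with $Y$ shrinks to a single vertex, at which point (A)/(B) for the small submatrix does yield (P1) or (P2). This forces bookkeeping you do not anticipate: the invariant (Q2), guaranteeing that a simplicial vertex of the current $A[Z_i]$ lying off $Y$ is still simplicial in the original $A[X]$, maintained via Lemma~\ref{lemsep} through the chain of nested separations, together with a case split on whether $(Z_{i-1}\setminus Y)\cap(C\cap D)$ is empty; when it is not, the paper splices two (P)-walks of the \emph{inner} separation through a vertex $z\in C\cap D$, using $A_{ab}=\min A[Z_{i-1}]$ for $a\in C\setminus D$, $b\in D\setminus C$ at the seam, to produce a walk rooted in $X\cap Y$ directly. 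Your seam intuition --- that values pinned at the minimum certify the strict three-point inequalities (\ref{eqwalk}) at junctions --- is sound and matches the paper's splicing steps, but without the descent and the (Q1)--(Q2) invariants the argument does not close, so the proposal has a genuine gap rather than a repairable omission of detail.
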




\begin{proof} 
We show that $A[X]$ satisfies (P1) or (P2) (same reasoning for $A[Y]$).
For this we will iteratively construct a sequence of subsets $Z_0=X,Z_1,\cdots,Z_k$ which is strictly monotone:
$Z_0 \supset \cdots \supset Z_i\supset \cdots \supset Z_k$ and satisfies  the following two properties (Q1)-(Q2)
for all $0\le i\le k$:
\begin{equation}\tag{Q1}
Z_i \text{  meets } Y \text{ and } V\setminus Y,
\end{equation}
\begin{equation}\tag{Q2}
\text{if } x\in Z_i\setminus Y \text{ is simplicial in }
A[Z_i] \text{ then } x \text{ is also simplicial in } A[X].
\end{equation}

We first observe that if we can find a set $Z_k$ satisfying (Q1)-(Q2) and  $|Z_k\cap Y|=1$  then we can stop and conclude that (P1) or (P2) holds for $A[X]$. To see this  consider the (proper) submatrix $A[Z_k]$, which by assumption 
 satisfies (A) or (B). Assume first  $A[Z_k]$ satisfies (B). Then there are distinct simplicial elements $u,v$ in $A[Z_k]$. At least one of them, say $u$, belongs to $Z_k\setminus Y$. Then by (Q2) we know that $u$ is simplicial in $A[X]$ and thus (P1) holds for $A[X]$. Assume now $A[Z_k]$ satisfies (A). Then there is a critical walk $W$ in $A[Z_k]$.
If its end point  $v_0$ belongs to $Z_k\setminus Y$ then $v_0$ is critical in $A[X]$ (again by (Q2)) and thus (P1) holds for $A[X]$. Assume now $v_0\in Y$. Then as $|Z_k\cap Y|=1$ the walk $W$ is in fact a weighted chordless walk rooted in $X\cap Y$ and thus (P2) holds for $A[X]$.

\smallskip 
We now proceed to  construct the sets $Z_i$ satisfying (Q1)-(Q2) until we can conclude that (P1) or (P2) holds for $A[X]$. 
We start with $Z_0=X$ which indeed satisfies (Q1)-(Q2).
Suppose we have $Z_{i-1}$ satisfying (Q1)-(Q2) and  $|Z_{i-1}\cap Y|\ge 2$.
Consider the matrix $A[Z_{i-1}]$.
%
%
We claim that if (P1) and  (P2) do not hold for $A[X]$ then 
\begin{equation}\label{eqCD}
\min A[Z_{i-1}]=A_{vw} \text{ for  some  }
v, w \in Z_{i-1}\cap Y \text{ with } v\neq w.
\end{equation}
By  assumption  $A[Z_{i-1}]$ satisfies (A) or (B). 
Assume first (B) holds and let $v, w$ be simplicial vertices in $A[Z_{i-1}]$ with $A_{vw}=\min A[Z_{i-1}]$.
If at least one of the two vertices belongs to $Z_{i-1}\setminus Y$, then (P1) follows by (Q2).
Otherwise, as $A_{uv}=\min A[Z_{i-1}]$, we get (\ref{eqCD}).

Assume next $A[Z_{i-1}]$ satisfies (A) and let $W$ be a critical walk in $A[Z_{i-1}]$, so its  end point $v_0$ is simplicial in $A[Z_{i-1}]$. If $v_0\in Z_{i-1}\setminus Y$ then $v_0$ is simplicial in $A[X]$ (by (Q2)) and thus (P1) holds. Assume now $v_0\in Z_{i-1}\cap Y$. 
As $W$ is critical there exists an internal vertex $u\in I(W)$ such that $A_{v_0u}=\min A[Z_{i-1}]$. If $u\in Y$ then (\ref{eqCD}) holds. Otherwise $W$ contains a subwalk which is a weighted chordless walk rooted in $X\cap Y$and thus  (P2) holds.

So we may now assume  (\ref{eqCD}) holds.
Let $(C,D)$ be a separation of $A[Z_{i-1}]$  separating  $v$ and $w$, as in Lemma \ref{lempath}, with (say) $v\in C\setminus D$ and  $w\in D\setminus C$. Without loss of generality $C\cap (V\setminus Y)\ne \emptyset$. Set $Z_i=C$. Then $Z_i\subset Z_{i-1}$ (since $w\in Z_{i-1}\setminus C$) and (Q1) holds for $Z_i$.
We claim:
\begin{equation}\label{eqCD2}
\text{If } (Z_{i-1}\setminus Y) \cap (C\cap D) \ne \emptyset\  \text{ then (P2) holds for } A[X].
\end{equation}
For this consider $z\in (Z_{i-1}\setminus Y) \cap (C\cap D)$
and  apply Lemma \ref{lempath}  to the separation $(C,D)$ of $A[Z_{i-1}]$ separating  $v,w\in Z_{i-1}\setminus (C\cap D)$ and $z\in C\cap D$.
Then either 
$A_{zv}> \min A[Z_{i-1}]$ holds or there exists a weighted chordless walk $W_1=(z,a,\cdots,v)$ from $z$ to $v$  in $A[Z_{i-1}]$ that is internally vertex-disjoint from $C\cap D$; in the former case we set $W_1=(z,v)$ (i.e., $a=v$). 
Analogously, either  $A_{zw}> \min A[Z_{i-1}]$ holds or there exists a weighted chordless walk $W_2=(z,b,\cdots, w)$ from $z$ to $w$ internally vertex-disjoint from $C\cap D$; in the former case set $W_2=(z,w)$ (ie., $b=w$).
Then $a\in C\setminus D$ and $b\in D\setminus C$, which  implies 
$A_{ab}=\min A[Z_{i-1}]$. 
From this it follows that the walk $W$ obtained by traveling first from $v$ to $z$ along the reverse of $W_1$ and then from $z$ to $w$ along $W_2$ is a weighted chordless walk in $A[X]$.  
By $z\notin X\cap Y$ and $v, w\in X\cap Y$ it contains at least one subwalk $W_0$ which is a weighted chordless walk in $A[X]$ rooted in $X\cap Y$ and thus   (P2) holds.

So we may now assume in addition that $(Z_{i-1}\setminus Y) \cap (C\cap D)=\emptyset$, we claim that $Z_i=C$ satisfies (Q2). For this let $x\in C\setminus Y$ simplicial in $A[C]$, we show that $x$ is simplicial in $A[X]$. Indeed,  $x\not\in D$ and thus $x$ is simplicial in $A[Z_{i-1}]$ (by Lemma \ref{lemsep}) and also in $A[X]$ (as $A[Z_{i-1}]$ satisfies  (Q2)). Hence $Z_i=C$ satisfies (Q1)-(Q2), which concludes the proof. \qed
\end{proof}

With the help of Lemma \ref{lemP12} we can now prove Theorem \ref{thmC}.

\begin{proof}{\em (of Theorem \ref{thmC})}
The proof is by induction on the size of the matrix $A$.
So we may assume $A$ has no self-contained pair of weighted chordless walks and every proper principal submatrix of $A$ satisfies (A) or (B).
Let $(X,Y)$ be a separation of $A$ satisfying property (P) of Lemma \ref{lempath}.

Assume first  $X\cap Y=\emptyset$. By the induction assumption $A[X]$ satisfies (A) or (B), which implies that the same holds for $A$ (since a simplicial vertex of $A[X]$ is also  simplicial in $A$ in view of Lemma \ref{lemsep}).

Assume now $S=X\cap Y\ne \emptyset$.
In view of Lemma \ref{lemP12}  both $A[X]$ and $A[Y]$ satisfy (P1) or (P2). We distinguish three cases, depending on these possible combinations.

\medskip\noindent
{\bf Case 1: Both $A[X]$ and $A[Y]$ satisfy (P1).} Hence there exist $x\in X\setminus S$ which is simplicial in $A[X]$ and $y\in Y\setminus S$ which is simplicial in $A[Y]$. Then $x,y$ are simplicial in $A$  (by Lemma \ref{lemsep})  with $A_{xy}=\min A$ and thus  (B) holds.

\medskip\noindent
{\bf Case 2: $A[X]$ satisfies (P1) and $A[Y]$ satisfies (P2) (or vice versa).}
So let $x\in X\setminus S$ which is simplicial in $A[X]$ and let $Q=(v_1,v_2,\cdots,v_{k-1},v_k)$ be a chordless walk in $A[Y]$ which is rooted in $S$ (i.e., $v_1,v_k\in S$ and $v_2,\cdots,v_{k-1}\in Y\setminus S$ with $k\geq 3$).
By property (P) there exist  weighted chordless walks $W_1=(x,\cdots,u,v_1)$ from $x$ to $v_1$  (resp., $W_2=(x,\cdots, v,v_k)$ from $x$ to $v_k$) 
in $A[X]$ which are internally vertex-disjoint from $S$,
where we allow a walk $W_1=(x,v_1)$ (resp., $W_2=(x,v_k)$) of length one, in which case $A_{xv_1}>\min A$ 
(resp., $A_{xv_k}>\min A$).
Consider the walk $W$ obtained by concatenating the three walks $W_1,Q,W_2$ in that order, that we may visualize as $W=([x] W_1 [v_1] Q [v_k] W_2 [x])$ (where we  insert the connection vertices between consecutive walks into brackets just to clarify the definition). Then $W$ is a closed walk whose end point $x$ is indeed simplicial in $A$ (in view of Lemma \ref{lemsep}). Moreover $W$ is a weighted chordless walk in $A$. Indeed the only missing inequalities are 
$A_{uv_2}<\min\{A_{uv_1},A_{v_1v_2}\}$ and $A_{vv_{k-1}}<\min \{A_{vv_k},A_{v_kv_{k-1}}\}$ which do hold since 
$A_{uv_2}=A_{vv_{k-1}}=\min A$ (as $u,v\in X\setminus S$ and $v_2,v_{k-1}\in Y\setminus S$).
Finally, we have $A_{xv_2}=\min A$. Therefore $W$ is a critical walk in $A$ and thus (A) holds.

\medskip\noindent
{\bf Case 3: Both $A[X]$ and $A[Y]$ satisfy (P2).}
So let $P=(u_1,u_2,\cdots, u_{l-1},u_l)$  (resp., $Q=(v_1,v_2,\cdots,v_{k-1},v_k)$) be a chordless walk in $A[X]$ (resp., in $A[Y]$),  which are rooted in $S$ (i.e., $u_1,u_l,v_1,v_k\in S$, $u_2,\cdots,u_{l-1}\in X\setminus S$, and $v_2,\cdots,v_{k-1}\in Y\setminus S$ with $k, l\geq 3$).
By property (P) there exist weighted chordless walks $W_1=(v_2,\cdots, y,u_1)$ from $v_2$ to $u_1$  and 
$W_2= (v_2,\cdots,y',u_l)$ from $v_2$ to $u_l$ in $A[Y]$ which are internally disjoint from $S$ (where $W_1$ and $W_2$ may have length one as in Case 2). Then one can check (as in Case 2) that the concatenated walk $W=([v_2] W_1 [u_1] P [u_l] W_2 [v_2])$ is a closed weighted chordless walk with $v_2$ as only vertex which is not an internal element of $W$.
Analogously, using again (P) we find weighted chordless walks $W_3=(u_2,\cdots,x,v_1)$ and
$W_4=(u_2,\cdots,x',v_k)$ in $A[X]$ which are internally disjoint from $S$.
So the walk $W'=([u_2] W_3 [v_1] Q [v_k] W_4 [u_2])$ is a weighted chordless walk in $A$ with only $u_2$ as non-internal element. Finally as $v_2$ is an internal element of $W'$ and $u_2$ is an internal element of $W$, the two walks $(W,W')$ form a self-contained pair of weighted chordless walks in $A$, which contradicts the assumption on $A$.
So we reach a contradiction and the proof is completed.\qed \end{proof}

\begin{acknowledgements}
 The second author was supported by JSPS Postdoctoral Fellowships for Research Abroad.
\end{acknowledgements}



\end{document}